\newcommand{\Lsp}[1]{\operatorname{L^{#1}}}
\DeclareMathOperator{\dif}{d}
\begin{document}

\title*{Green's functions for even order boundary value problems}
\author{Alberto Cabada and Luc{\' i}a L\'opez-Somoza}
\institute{Departamento de Estat\'istica, An\'alise Matem\'atica e Optimizaci\'on, 
	Instituto de Matem\'aticas, Facultade de Matem\'aticas,
	Universidade de Santiago de Compostela. \email{alberto.cabada@usc.es; lucia.lopez.somoza@usc.es}
}
%
%
\maketitle

\abstract{In this paper we will show several properties of the Green's functions related to various boundary value problems of arbitrary even order. In particular, we will write the expression of the Green's functions related to the general differential operator of order $2n$ coupled to Neumann, Dirichlet and mixed boundary conditions, as a linear combination of the Green's functions corresponding to periodic conditions on a different interval. This will allow us to ensure the constant sign of various Green's functions and to deduce spectral results. }

\section{Introduction}
Boundary value problems have been widely studied. This is due to the fact that these problems arise in many areas to model from most of physical problems to biological or economical ones.

It is very well-known that the solutions of a given boundary value problem coincide with fixed points of related integral operators which have as kernel the associated Green's function in each case. So, the Green's functions play a very important role in the study of boundary value problems. 

In particular, some of the main techniques applied to prove the existence of solutions of nonlinear boundary value problems are, among others, monotone iterative techniques  (see \cite{HeikLaks,LaddeLaksVat,TorresZhang2003}), lower and upper solutions method (see \cite{Cab3, DeC}) or fixed points theorems (see \cite{HeikLaks,Tor}). In all these cases, the constant sign of the associated Green's functions is usually fundamental to prove such results. 

Traditionally, the most studied boundary value problems have been the periodic and the two-point ones. In this paper we will take advantage of such studies by finding some connections between Green's functions of various separated two point boundary conditions and Green's functions of periodic problem. The key idea is that the expression of the Green's function related to each two-points case can be obtained as a linear combination of the Green's function of periodic problems.

From these expressions relating the different Green's functions, we will be able to compare their constant sign. These results will allow us to obtain some comparison principles which assure that, under certain hypotheses, the solution of a boundary value problem under some suitable conditions is bigger in every point than the solution of the same equation under another type of boundary conditions.

We will also obtain a decomposition of the spectrum of some problems as a combination of the other ones and some relations of order between the first eigenvalues of the considered problems.

The paper is organized as follows: Section 2 includes some preliminary results and proves a symmetry property which will be satisfied by some Green's functions. In Section 3, the aforementioned decomposition of Green's functions is fully detailed. Section 4 proves some results relating the constant sign of Green's function and includes also some counterexamples, showing that some properties which hold for second order boundary value problems are not true for higher order ones. In Section 5, both the spectra and the first eigenvalues of the considered problems are related. Section 6 proves some point-by-point relations between different Green's functions and also between solutions of some linear problem under several boundary conditions. 

It must be pointed out that the study developed in Sections 3 to 6 has been done in \cite{CabCidSom2016} for the particular case of Hill's equation. This is generalized here for any $2n$-th order boundary value problem. However, some arguments which worked for Hill's equation (mainly the ones related with oscillation theory) do not hold for $n>1$. This implies that some of the results proved in this paper will be weaker than the corresponding ones for Hill's equation (in particular, as we have mentioned, some counterexamples will be shown in Section 4).

\section{Preliminary results}
Consider the $2n$-order general linear operator
\begin{equation}
	\label{e-L-2n}
	L\,u(t)\equiv u^{(2n)}(t)+a_{2n-1}(t)\,u^{(2n-1)}(t)+\dots +a_1(t)\,u'(t)+a_0(t)\,u(t), \quad t \in I,
\end{equation}
with $I\equiv [0,T]$, $a_k: I \to \mathbb{R}, \ a_k\in \Lsp{\alpha}(I),\; \alpha\geq 1$, $k=0,\dots,2n-1$.  

We will work with the space 
\[W^{2n,1}(I)=\left\{u\in\mathcal{C}^{2n-1}(I): \ u^{(2n-1)}\in \mathcal{A}\mathcal{C}(I) \right\},\]
where $\mathcal{A}\mathcal{C}(I)$ denotes the set of absolutely continuous functions on $I$. In particular, we will consider $X\subset W^{2n,1}(I)$ a Banach space such that the following definition is satisfied.
\begin{definition} Given a Banach space $X$, operator $L$ is said to be nonresonant in $X$ if and only if the homogeneous equation 
	\begin{equation*}
		L\,u(t)=0\quad \mbox{ a. e. }\; t\in I,\qquad u\in X,
	\end{equation*}
	has only the trivial solution.
\end{definition}

It is very well known that if $\sigma \in \Lsp{1}(I)$ and operator $L$ is nonresonant in $X$, then the nonhomogeneous problem
\[L\,u(t)=\sigma(t)\quad \mbox{ a.\,e.  }t\in I,\quad u\in X,\]
has a unique solution given by
\[u(t)=\int_0^T G[T](t,s)\,\sigma(s)\,\dif s,\qquad \forall\; t\in I,\]
where $G[T]$ denotes the Green's function related to operator $L$ on $X$ and it is uniquely determined. See \cite{Cablibro} for details.
%
%
%
%

We will introduce now an auxiliary linear operator, whose coefficients will be defined from the ones of operator $L$ as follows:
\begin{equation*}
	\widetilde{L}\,u(t)\equiv  u^{(2n)}(t)+\hat{a}_{2n-1}(t)\,u^{(2n-1)}(t)+\tilde{a}_{2n-2}(t)\,u^{(2n-2)}(t)+\dots +\hat{a}_1(t)\,u'(t)+\tilde{a}_0(t)\,u(t), \quad t \in J\equiv[0,2T],
\end{equation*}
where $\tilde{a}_{2k}$ and $\hat{a}_{2k+1}$, $k=0,\dots,n-1$, are the even and odd extensions of $a_{2k}$ and $a_{2k+1}$ to $J$.

We obtain the following symmetric property for Green's functions related to operator $\widetilde{L}$.
\begin{lemma}\label{l-sim-G-2n}
	Let $X\subset W^{2n,1}(J)$ be a Banach space such that operator $\widetilde{L}$ is nonresonant in $X$ and let $G[2T]$ denote the corresponding Green's function. Moreover, suppose that if $v\in X$ and $w(t):=v(2\,T-t)$, $t\in J$, then $w\in X$. Then the following equality holds:
	\begin{equation}\label{e-l-sim-G-2n}
		G[2\,T](t,s)=G[2\,T](2\,T-t,2\,T-s)\qquad \forall \;(t,s)\in J \times J.
	\end{equation}
\end{lemma}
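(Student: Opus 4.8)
The plan is to exploit that the reflection $t\mapsto 2\,T-t$ leaves the operator $\widetilde L$ invariant, and then to invoke the uniqueness of the Green's function. First I would establish the identity
\[\widetilde L\,\bar u(t)=(\widetilde L\,u)(2\,T-t),\qquad t\in J,\]
for every $u\in X$, where $\bar u(t):=u(2\,T-t)$. Since $\bar u^{(j)}(t)=(-1)^{j}\,u^{(j)}(2\,T-t)$, the $j$-th derivative in each summand of $\widetilde L\,\bar u(t)$ contributes a factor $(-1)^{j}$, whereas by construction $\tilde a_{2k}(2\,T-t)=\tilde a_{2k}(t)$ (even extension about $T$) and $\hat a_{2k+1}(2\,T-t)=-\hat a_{2k+1}(t)$ (odd extension about $T$). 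Hence the two sign changes cancel in the odd-order terms and there is none in the even-order terms, which yields the identity. This is the only step that needs care, and it amounts to a routine but slightly tedious check on the parities.

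Next I would transport the integral representation. Fix $\sigma\in\Lsp{1}(J)$ and let $u\in X$ be the unique solution of $\widetilde L\,u=\sigma$, so that $u(t)=\int_0^{2T}G[2\,T](t,s)\,\sigma(s)\,\dif s$. Put $\bar u(t):=u(2\,T-t)$; by hypothesis $\bar u\in X$, and by the identity above $\widetilde L\,\bar u(t)=\sigma(2\,T-t)$ a.\,e. $t\in J$. Since $\widetilde L$ is nonresonant in $X$, $\bar u$ is the \emph{unique} solution in $X$ of this last problem, so by the Green's function representation and the substitution $s\mapsto 2\,T-s$,
\[\bar u(t)=\int_0^{2T}G[2\,T](t,s)\,\sigma(2\,T-s)\,\dif s=\int_0^{2T}G[2\,T](t,2\,T-s)\,\sigma(s)\,\dif s.\]
Comparing with $\bar u(t)=u(2\,T-t)=\int_0^{2T}G[2\,T](2\,T-t,s)\,\sigma(s)\,\dif s$ gives $\int_0^{2T}\bigl(G[2\,T](2\,T-t,s)-G[2\,T](t,2\,T-s)\bigr)\sigma(s)\,\dif s=0$ for every $\sigma\in\Lsp{1}(J)$ and every $t\in J$.

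Finally, since $\sigma$ is arbitrary and $G[2\,T]$ is continuous (for a $2n$-th order operator the jump occurs only in the $(2n-1)$-th derivative, with $2n-1\geq1$), I would conclude $G[2\,T](2\,T-t,s)=G[2\,T](t,2\,T-s)$ for every $(t,s)\in J\times J$; replacing $t$ by $2\,T-t$ then gives \eqref{e-l-sim-G-2n}. The main obstacle I anticipate is the first step, namely ensuring the sign/parity cancellation works uniformly across all derivative orders at once. Once that reflection identity is available, the rest follows immediately from the uniqueness of the Green's function and the stated stability of $X$ under $v\mapsto v(2\,T-\cdot)$.
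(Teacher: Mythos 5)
Your proposal is correct and follows essentially the same argument as the paper: reflect the unique solution of $\widetilde L\,v=\sigma$, use the parity of the even/odd coefficient extensions to see that the reflected function solves the problem with datum $\sigma(2\,T-\cdot)$, and compare the two Green's function representations for arbitrary $\sigma\in\Lsp{1}(J)$. The only difference is that you spell out the parity cancellation and the final localization step, which the paper leaves as routine verifications.
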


\begin{proof}
	Let $\tilde \sigma\in \Lsp{1}(J)$ be arbitrarily chosen and consider the problem 
	\[\widetilde{L} v(t)=\widetilde{\sigma}(t), \ \text{a.\,e. } t\in J, \ v\in X. \]
	
	Since operator $\widetilde{L}$ is nonresonant in $X$, this problem has a unique solution $v$ which is given by
	\[ v(t)=\int_0^{2\,T}G[2\,T](t,s)\,\tilde \sigma(s)\,\dif s. \]
	
	On the other hand, due to the fact that $\tilde a_{2k}(t)=\tilde a_{2k}(2\,T-t)$ and $\hat a_{2k+1}(t)=-\hat a_{2k+1}(2\,T-t)$, it is easy to verify that $w(t)=v(2\,T-t)$ is the unique solution of the problem
	\[\widetilde{L} w(t)=\widetilde{\sigma}(2\,T-t), \ \text{a.\,e. } t\in J, \ w\in X. \]
	
	Therefore, 
	\[ w(t)=\int_0^{2\,T}G[2\,T](t,s)\,\tilde \sigma(2\,T-s)\,\dif s=\int_0^{2\,T}G[2\,T](t,2\,T-s)\,\tilde \sigma(s)\,\dif s.\]
	
	Now, since
	\[ w(t)= v(2\,T-t)= \int_0^{2\,T}G[2\,T](2\,T-t,s)\,\tilde \sigma(s)\,\dif s, \]
	and $\tilde \sigma\in \Lsp{1}(J)$ is arbitrary, we arrive at the following equality
	\[G[2\,T](2\,T-t,s)=G[2\,T](t,2\,T-s)\qquad \forall \;(t,s)\in J\times J\]
	or, which is the same,
	\begin{equation*}
		G[2\,T](t,s)=G[2\,T](2\,T-t,2\,T-s)\qquad \forall \;(t,s)\in J \times J.
	\end{equation*}
\end{proof}

In addition, we will consider another auxiliary operator $\widetilde{\widetilde{L}}$ which will be constructed from $\widetilde{L}$ in the same way than $\widetilde{L}$ has been constructed from $L$, that is:
\begin{equation*}
	\widetilde{\widetilde{L}}\,u(t)\equiv  u^{(2n)}(t)+\hat{\hat{a}}_{2n-1}(t)\,u^{(2n-1)}(t) +\tilde{\tilde{a}}_{2n-2}(t)\,u^{(2n-2)}(t)+\dots +\hat{\hat{a}}_1(t)\,u'(t)+\tilde{\tilde{a}}_0(t)\,u(t),
\end{equation*}
$t \in [0,4T]$, where $\tilde{\tilde{a}}_{2k}$ and $\hat{\hat{a}}_{2k+1}$, $k=0,\dots,n-1$, are the even and odd extensions to the interval $[0,4T]$ of $\tilde{a}_{2k}$ and $\hat{a}_{2k+1}$, respectively.

To finish with this preliminary section, we will show two particular cases of some more general spectral results given in \cite[Lemmas 1.8.25 and 1.8.33]{Cablibro}. For these results we need to introduce a new differential operator.

For any $\lambda\in \mathbb{R}$, consider operator $L[\lambda]$ defined from operator $L$ given by
\begin{equation*}
	L[\lambda]\,u(t)\equiv u^{(2n)}(t)+a_{2n-1}(t)\,u^{(2n-1)}(t)+\dots +a_1(t)\,u'(t)+(a_0(t)+\lambda)\,u(t), \quad t \in I.
\end{equation*}
In particular, note that $L\equiv L[0]$. When working with this operator, to stress the dependence of the Green's function on the parameter $\lambda$, we will denote by $G[\lambda,T]$ the Green's function related to $L[\lambda]$. Again, note that $G[T]\equiv G[0,T]$. Analogous notation can we used for $\widetilde{L}[\lambda]$ and $\widetilde{\widetilde{L}}[\lambda]$, whose related Green's functions will be denoted by $G[\lambda,2\,T]$ and $G[\lambda,4\,T]$, respectively. 

\begin{lemma}\label{l-M-NT-2n}
	Suppose that operator $L$ is nonresonant in a Banach space $X$, its related Green's function $G[T]$ is nonpositive on $I \times I$, and satisfies condition
	\begin{enumerate}
		\item[$(N_g)$] There is a continuous function $\phi(t) >0$ for all $t \in (0,T)$ and $k_1, \; k_2 \in \Lsp{1}(I)$, such that $k_1(s)<k_2(s)<0$ for a.\,e. $s \in I$, satisfying
		\[ \phi(t)\, k_1(s) \le  G[T](t,s) \le \phi(t)\, k_2(s), \quad \mbox{for a.\,e. } (t,s)  \in I \times I.\]
	\end{enumerate}
	
	Then $G[\lambda,T]$ is nonpositive on $I \times I$ if and only if $\lambda\in (-\infty, \lambda_1(T))$ or  $\lambda\in [-\bar{\mu}(T), \lambda_1(T))$, with $\lambda_1(T) >0$ the first eigenvalue of operator $L$ in $X$ and ${\bar{\mu}(T) \ge 0}$ such that $L[-\bar{\mu}(T)]$ is nonresonant in $X$ and the related nonpositive Green's function $G[-\bar{\mu}(T),T]$ vanishes at some point of the square $I\times I$.
\end{lemma}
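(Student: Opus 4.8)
The plan is to reduce the statement to an abstract spectral fact about the integral operator attached to $G[T]$, which is exactly the content of \cite[Lemmas~1.8.25 and~1.8.33]{Cablibro}. For every $\lambda$ for which $L[\lambda]$ is nonresonant in $X$, let $\mathcal{G}_\lambda\colon\Lsp{1}(I)\to X$ be the operator $(\mathcal{G}_\lambda\,\sigma)(t)=\int_0^T G[\lambda,T](t,s)\,\sigma(s)\,\dif s$, so that $\mathcal{G}_0=\mathcal{G}:=L^{-1}$, and set $A:=-\mathcal{G}$. The condition $G[\lambda,T]\le 0$ on $I\times I$ is equivalent to $-\mathcal{G}_\lambda$ being a positive operator (one that maps nonnegative functions to nonnegative functions). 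By hypothesis $A\ge 0$, and $(N_g)$ says precisely that $A$ is $\phi$-positive in the sense of Krasnoselskii: for every nonnegative $\sigma\in\Lsp{1}(I)$ which is not identically zero,
\[\phi(t)\int_0^T\bigl(-k_2(s)\bigr)\sigma(s)\,\dif s\ \le\ (A\,\sigma)(t)\ \le\ \phi(t)\int_0^T\bigl(-k_1(s)\bigr)\sigma(s)\,\dif s,\]
with $-k_1>-k_2>0$ a.\,e. Since $A$ is moreover compact (a standard feature of Green's operators of ODEs: the family $\{\mathcal{G}\sigma:\|\sigma\|_1\le 1\}$ is bounded and equicontinuous, by uniform continuity of $G[T]$ on $I\times I$, so Arzel\`a--Ascoli applies), the Krein--Rutman theorem applies: its spectral radius $r>0$ is an eigenvalue, with an eigenfunction $\varphi$ squeezed between two positive multiples of $\phi$, so that $\varphi(t)>0$ for $t\in(0,T)$. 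From $A\,\varphi=r\,\varphi$ one gets $L\,\varphi=-(1/r)\,\varphi$; thus $\lambda_1(T):=1/r>0$ is the first eigenvalue of $L$ in $X$, namely the one carrying a positive eigenfunction.

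First I would pin down the right endpoint. The resolvent identity $\mathcal{G}_\lambda=(I+\lambda\,\mathcal{G})^{-1}\mathcal{G}$, immediate from $u=\mathcal{G}_\lambda\,\sigma\iff u+\lambda\,\mathcal{G}\,u=\mathcal{G}\,\sigma$, gives (recalling $\mathcal{G}=-A$) the identity $-\mathcal{G}_\lambda=(I-\lambda\,A)^{-1}A$. If $0\le\lambda<\lambda_1(T)$ then the spectral radius of $\lambda\,A$ equals $\lambda\,r<1$, hence $(I-\lambda\,A)^{-1}=\sum_{k\ge 0}\lambda^{k}A^{k}$ converges to a positive operator and $-\mathcal{G}_\lambda=\sum_{k\ge 0}\lambda^{k}A^{k+1}\ge 0$; that is, $G[\lambda,T]\le 0$. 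At $\lambda=\lambda_1(T)$ we have $r\in\sigma(A)$, so $L[\lambda]$ is resonant. Finally, if $\lambda>\lambda_1(T)$ and $L[\lambda]$ were nonresonant with $G[\lambda,T]\le 0$, then applying the positive operator $-\mathcal{G}_\lambda=(I-\lambda\,A)^{-1}A$ to $\varphi\ge 0$ would produce $\frac{r}{1-\lambda\,r}\,\varphi$, which is $\le 0$ on $I$ and $<0$ on $(0,T)$ because $1-\lambda\,r<0$ --- a contradiction. Hence $\sup\{\lambda:\ G[\lambda,T]\le 0\}=\lambda_1(T)$, and this supremum is not attained.

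It remains to handle $\lambda<0$, which is the delicate part. Writing $\lambda=-\mu$ with $\mu\ge 0$, define
\[\bar{\mu}(T):=\sup\Bigl\{\mu\ge 0:\ L[-\mu']\text{ is nonresonant in }X\text{ and }G[-\mu',T]\le 0\text{ on }I\times I\text{ for all }\mu'\in[0,\mu]\Bigr\}.\]
Since $L[0]=L$ is nonresonant with $G[T]\le 0$, this set contains $0$, so $\bar{\mu}(T)\ge 0$. If $\bar{\mu}(T)=+\infty$, then together with the previous paragraph $G[\lambda,T]\le 0$ precisely for $\lambda\in(-\infty,\lambda_1(T))$, which is the first alternative. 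If $\bar{\mu}(T)<\infty$, observe that for $\mu\in[0,\bar{\mu}(T))$ the identity $-\mathcal{G}_{-\mu}=A-\mu\,A\bigl(-\mathcal{G}_{-\mu}\bigr)$ together with $-\mathcal{G}_{-\mu}\ge 0$ forces $-\mathcal{G}_{-\mu}\le A$ on the cone, whence the a priori kernel bound $0\le -G[-\mu,T](t,s)\le -G[T](t,s)\le\phi(t)\bigl(-k_1(s)\bigr)$. This uniform bound, combined with the continuous dependence of $G[\lambda,T]$ on $\lambda$ on the set of $\lambda$ for which $L[\lambda]$ is nonresonant, and with compactness, rules out any blow-up as $\mu\uparrow\bar{\mu}(T)$: $L[-\bar{\mu}(T)]$ is nonresonant and $G[-\bar{\mu}(T),T]=\lim_{\mu\uparrow\bar{\mu}(T)}G[-\mu,T]\le 0$. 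Moreover $G[-\bar{\mu}(T),T]$ must vanish at some point of $I\times I$, since otherwise $-\mathcal{G}_{-\bar{\mu}(T)}$ would still be $\phi$-positive with bounds of the type appearing in $(N_g)$, a property that would survive a small further increase of $\mu$ and so contradict the definition of $\bar{\mu}(T)$. Finally, the same Krasnoselskii-type argument shows that the set of admissible $\mu$ is connected and that $G[-\mu,T]\not\le 0$ for every $\mu>\bar{\mu}(T)$; combined with the second paragraph, this yields that $G[\lambda,T]\le 0$ if and only if $\lambda\in[-\bar{\mu}(T),\lambda_1(T))$.

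I expect the main obstacle to be precisely that last paragraph: proving that the set of $\lambda$ for which $G[\lambda,T]$ is nonpositive is an interval --- it does not ``reappear'' for very negative $\lambda$ --- and that at its left endpoint the Green's function is nonpositive but necessarily degenerate. This is where one genuinely needs the theory of $u_0$-positive (strongly positive) operators together with the continuous-dependence and compactness estimates, and it is exactly what is encapsulated in \cite[Lemmas~1.8.25 and~1.8.33]{Cablibro}, which I would invoke rather than reprove in full. By contrast, the identification of the right endpoint with $\lambda_1(T)$ via Krein--Rutman and the Neumann series is routine.
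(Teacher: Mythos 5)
The paper itself does not prove this lemma: it is stated as a particular case of \cite[Lemmas 1.8.25 and 1.8.33]{Cablibro}, so there is no internal proof to compare against, and since you ultimately invoke that same reference your treatment ends up taking the same route. The extra operator-theoretic sketch you add is, where detailed, a sound outline of the argument behind the cited results: condition $(N_g)$ makes $A=-\mathcal{G}$ a compact, $\phi$-positive operator (note $r(A)>0$ follows from $A\phi\ge\bigl(\int_0^T(-k_2)\phi\bigr)\phi$), Krein--Rutman identifies $\lambda_1(T)=1/r(A)$ as the eigenvalue carrying a constant-sign eigenfunction, the Neumann series for $(I-\lambda A)^{-1}A$ gives $G[\lambda,T]\le 0$ on $[0,\lambda_1(T))$, resonance occurs at $\lambda_1(T)$, and the eigenfunction computation excludes $\lambda>\lambda_1(T)$; likewise the identity $-\mathcal{G}_{-\mu}=A-\mu A(-\mathcal{G}_{-\mu})$ correctly yields the uniform kernel bound used to pass to the limit $\mu\uparrow\bar{\mu}(T)$. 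The one place where the sketch, read on its own, does not close is the step you yourself flag: connectivity is built into your definition of $\bar{\mu}(T)$, so the claims that nonpositivity cannot reappear for $\mu>\bar{\mu}(T)$ and that $G[-\bar{\mu}(T),T]$ must vanish somewhere are not derived from anything preceding them --- ``the same Krasnoselskii-type argument'' has not actually been run, and making it work requires transferring an $(N_g)$-type two-sided estimate to $G[-\mu,T]$ (or the $u_0$-positive operator machinery) exactly as in \cite{Cablibro}. Since you explicitly delegate precisely that step to \cite[Lemmas 1.8.25 and 1.8.33]{Cablibro}, while the paper delegates the whole lemma, the proposal is acceptable and effectively coincides with the paper's approach, with the added value of making the right-endpoint analysis and the a priori bounds explicit.
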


\begin{lemma}
	\label{l-M-PT-2n}
	Suppose that operator $L$ is nonresonant in a Banach space $X$, its related Green's function $G[T]$ is nonnegative on $I \times I$, and satisfies condition
	\begin{enumerate}
		\item[$(P_g)$] There is a continuous function $\phi(t) >0$ for all $t \in (0,T)$ and $k_1, \; k_2 \in \Lsp{1}(I)$, such that $0<k_1(s)<k_2(s)$ for a.\,e. $s \in I$, satisfying
		\[\phi(t)\, k_1(s) \le  G[T](t,s) \le \phi(t)\, k_2(s), \quad \mbox{for a.\,e. } (t,s)  \in I \times I.\]
	\end{enumerate}
	
	Then $G[\lambda,T]$ is nonnegative on $I \times I$ if and only if $\lambda\in (\lambda_1(T),\infty)$ or  $\lambda\in (\lambda_1(T), \bar{\mu}(T)]$, with $\lambda_1(T) <0$ the first eigenvalue of operator $L$ in $X$ and $\bar{\mu}(T) \ge 0$ such that $L[\bar{\mu}(T)]$ is nonresonant in $X$ and the related nonnegative Green's function $G[\bar{\mu}(T),T]$ vanishes at some point of the square $I\times I$.
\end{lemma}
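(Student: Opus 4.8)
The plan is to read this statement as the special case of the abstract spectral lemma \cite[Lemma 1.8.33]{Cablibro} obtained by perturbing the zero-order coefficient $a_0$ into $a_0+\lambda$: once hypothesis $(P_g)$ is recognised as the strong-positivity assumption used there, the conclusion follows. For a self-contained argument I would first recast everything through the linear integral operator $\mathcal{G}\colon\mathcal{C}(I)\to\mathcal{C}(I)$, $\mathcal{G}u(t)=\int_0^T G[T](t,s)\,u(s)\,\dif s$, which is compact and injective. Then, for $\lambda\neq 0$, $L[\lambda]$ is nonresonant in $X$ exactly when $-1/\lambda\notin\sigma(\mathcal{G})$, and in that case $G[\lambda,T]$ is the kernel of $(\mathrm{Id}+\lambda\,\mathcal{G})^{-1}\mathcal{G}$; recall $L[0]=L$ and $G[0,T]=G[T]$.

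Condition $(P_g)$ says precisely that $\mathcal{G}$ is strongly positive with respect to the cone of nonnegative functions, with $\mathcal{G}u/\phi$ bounded between two positive constants for each $u\geq 0$, $u\not\equiv 0$. By the Krein--Rutman theorem in its form for such ($u_0$-)positive compact operators, $r(\mathcal{G})>0$ is a simple eigenvalue, the only one admitting a nonnegative eigenfunction $\varphi$ (which satisfies $c_1\phi\leq\varphi\leq c_2\phi$), and it strictly dominates the modulus of every other eigenvalue; thus $\lambda_1(T):=-1/r(\mathcal{G})<0$ is the first eigenvalue of $L$ in $X$, with eigenfunction $\varphi$. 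Differentiating the resolvent identity gives, wherever $G[\lambda,T]$ exists,
\[
  \frac{\partial}{\partial\lambda}\,G[\lambda,T](t,s)=-\int_0^T G[\lambda,T](t,r)\,G[\lambda,T](r,s)\,\dif r ,
\]
so on any $\lambda$-interval along which $G[\lambda,T]$ is nonnegative the map $\lambda\mapsto G[\lambda,T](t,s)$ is nonincreasing for every $(t,s)$; moreover $\varphi$ stays an eigenfunction of $(\mathrm{Id}+\lambda\,\mathcal{G})^{-1}\mathcal{G}$ with eigenvalue $r(\mathcal{G})/(1+\lambda\,r(\mathcal{G}))$ for every $\lambda\neq\lambda_1(T)$ at which the inverse exists.

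With these tools the left endpoint is settled directly: for $\lambda\in(\lambda_1(T),0]$ one has $|\lambda|\,r(\mathcal{G})<1$, so the Neumann series yields $(\mathrm{Id}+\lambda\,\mathcal{G})^{-1}\mathcal{G}=\sum_{j\geq 0}(-\lambda)^j\,\mathcal{G}^{\,j+1}$, a sum of nonnegative kernels (as $-\lambda\geq 0$), whence $G[\lambda,T]\geq G[T]\geq 0$; while if $\lambda<\lambda_1(T)$ and $G[\lambda,T]$ existed and were nonnegative, applying the positive operator $(\mathrm{Id}+\lambda\,\mathcal{G})^{-1}\mathcal{G}$ to $\varphi\geq 0$, $\varphi\not\equiv 0$, and using the eigenvalue relation would force $r(\mathcal{G})/(1+\lambda\,r(\mathcal{G}))\geq 0$, i.e.\ $1+\lambda\,r(\mathcal{G})>0$, i.e.\ $\lambda>\lambda_1(T)$, a contradiction ($\lambda=\lambda_1(T)$ being resonant). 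So $\{\lambda\leq 0:\ G[\lambda,T]\geq 0\}=(\lambda_1(T),0]$. For the right endpoint, set $\bar\mu(T):=\sup\{\lambda\geq 0:\ G[\mu,T]\text{ exists and is nonnegative for all }\mu\in[0,\lambda]\}$; if $\bar\mu(T)=+\infty$ we are in the first alternative. Otherwise the monotonicity above gives $0\leq G[\lambda,T]\leq G[T]$ on $[0,\bar\mu(T))$, so $\|G[\lambda,T]\|$ stays bounded there; since (using injectivity of $\mathcal{G}$) the resolvent would blow up as $\lambda$ approached an eigenvalue, $L[\bar\mu(T)]$ is nonresonant, and by continuity $G[\bar\mu(T),T]=\lim_{\lambda\uparrow\bar\mu(T)}G[\lambda,T]\geq 0$; moreover it must vanish somewhere on $I\times I$, for otherwise $G[\bar\mu(T),T]\geq\delta>0$ on the compact square and continuity would give $G[\lambda,T]>0$ for $\lambda$ slightly above $\bar\mu(T)$, against the definition of $\bar\mu(T)$. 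It then remains to show that $G[\lambda,T]$ is not nonnegative for any $\lambda>\bar\mu(T)$; combined with the previous step this yields the characterization $(\lambda_1(T),\bar\mu(T)]$.

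I expect this last point to be the main obstacle. On the left it sufficed to test against the fixed positive eigenfunction $\varphi$, which is an airtight obstruction; on the right there is no such soft argument, and one must show that the $(P_g)$-type two-sided bounds (hence the $u_0$-positivity and the rigidity of the spectral picture) persist along the whole segment $[0,\bar\mu(T)]$, degenerating only at $\bar\mu(T)$, so that the first zero of $G[\bar\mu(T),T]$ occurs where the derivative identity above is \emph{strictly} negative and, together with the monotonicity, nonnegativity cannot be recovered beyond $\bar\mu(T)$. This propagation — together with the purely technical chores of justifying compactness of $\mathcal{G}$ and differentiability of $\lambda\mapsto G[\lambda,T]$ in the present low-regularity setting ($a_k\in\Lsp{\alpha}(I)$, order $2n$) — is exactly what makes it cleaner to verify the hypotheses of \cite[Lemma 1.8.33]{Cablibro} and invoke it. Finally, Lemma~\ref{l-M-NT-2n} follows from the same scheme applied to $-\mathcal{G}$, which is the strongly positive operator when $G[T]\leq 0$.
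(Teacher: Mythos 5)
Your primary route --- recognising the lemma as the particular case of \cite[Lemma 1.8.33]{Cablibro} obtained by replacing $a_0$ with $a_0+\lambda$, with $(P_g)$ serving as the positivity hypothesis required there --- is exactly the paper's approach: the paper offers no independent proof and states the result precisely as a special case of \cite[Lemmas 1.8.25 and 1.8.33]{Cablibro}. Your supplementary Krein--Rutman/resolvent sketch is a sensible outline of how such a characterization is established, but, as you acknowledge yourself, it is incomplete at the right endpoint (excluding nonnegativity for $\lambda>\bar{\mu}(T)$), so the citation remains the operative argument, in agreement with the paper.
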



\section{Decomposing Green's functions}\label{S-Decomp-Green}
In this section we will obtain the expression of the Green's function of different two-point boundary value problems (Neumann, Dirichlet and Mixed problems) as a sum of Green's functions of other related problems. 

A similar decomposition has been made in \cite{CabCidSom2016} for the particular case of $n=1$ and $a_1\equiv 0$, which will be generalised here. There, the authors worked with Hill's operator, namely
\[\mathcal{L}\,u(t)\equiv u''(t)+a(t)\,u(t), \]
with $a\in \Lsp{\alpha}(I)$, $\alpha\ge 1$.

In this case, we will deal with some problems related to operators $L$ and $\widetilde{L}$ and the periodic problem related to $\widetilde{\widetilde{L}}$, which we describe in the sequel:
\begin{itemize}
	\item Neumann problem on the interval $[0,T]$:
	\vspace*{-5pt}
	\begin{equation}\label{e-N-2n}\tag{$N,\,T$}\left\{\begin{array}{l}
			L\,u(t)=\sigma(t),\quad \mbox{ a. e.  }\;t\in I,\\[5pt]
			u^{(2k+1)}(0)=u^{(2k+1)}(T)=0, \ k=0,\dots,n-1.
		\end{array}\right.\end{equation}
		\vspace*{-5pt}
	\item Dirichlet problem on the interval $[0,T]$:
		\vspace*{-5pt}
	\begin{equation}\label{e-D-2n}\tag{$D,\,T$}\left\{\begin{array}{l}
			L\,u(t)=\sigma(t),\quad \mbox{ a. e.  }\;t\in I,\\[5pt]
			u^{(2k)}(0)=u^{(2k)}(T)=0, \ k=0,\dots,n-1.
		\end{array}\right.\end{equation}
		\vspace*{-5pt}
	\item Mixed problem 1 on the interval $[0,T]$:
		\vspace*{-5pt}
	\begin{equation}\label{e-M1-2n}\tag{$M_1,\,T$}\left\{\begin{array}{l}
			L\,u(t)=\sigma(t),\quad \mbox{ a. e.  }\;t\in I,\\[5pt]
			u^{(2k+1)}(0)=u^{(2k)}(T)=0, \ k=0,\dots,n-1.
		\end{array}\right.\end{equation}
		\vspace*{-5pt}
	\item Mixed problem 2 on the interval $[0,T]$:
		\vspace*{-5pt}
	\begin{equation}\label{e-M2-2n}\tag{$M_2,\,T$}\left\{\begin{array}{l}
			L\,u(t)=\sigma(t),\quad \mbox{ a. e.  }\;t\in I,\\[5pt]
			u^{(2k)}(0)=u^{(2k+1)}(T)=0, \ k=0,\dots,n-1.
		\end{array}\right.\end{equation}
		\vspace*{-5pt}
	\item Periodic problem on the interval $[0,2\,T]$:
		\vspace*{-5pt}
	\begin{equation}\label{e-P-2T-2n}\tag{$P,\,2\,T$} \left\{\begin{array}{l} \widetilde{L}\,u(t)=\tilde \sigma(t),\quad \mbox{ a. e.   }\; t\in J,\\[5pt]
			u^{(k)}(0)=u^{(k)}(2\,T),\ k=0,\dots,2n-1.
		\end{array}\right.\end{equation}
		\vspace*{-5pt}
	\item Antiperiodic problem on the interval $[0,2\,T]$:
		\vspace*{-5pt}
	\begin{equation}\label{e-A-2T-2n}\tag{$A,\,2\,T$} \left\{\begin{array}{l} \widetilde{L}\,u(t)=\tilde \sigma(t),\quad \mbox{ a. e.   }\; t\in J,\\[5pt]
			u^{(k)}(0)=-u^{(k)}(2\,T),\ k=0,\dots,2n-1.
		\end{array}\right.\end{equation}
		\vspace*{-5pt}
	\item Neumann problem on the interval $[0,2\,T]$:
		\vspace*{-5pt}
	\begin{equation}\label{e-N-2T-2n}\tag{$N,\,2\,T$} \left\{\begin{array}{l} \widetilde{L}\,u(t)=\tilde \sigma(t),\quad \mbox{ a. e.   }\; t\in J,\\[5pt]
			u^{(2\,k+1)}(0)=u^{(2\,k+1)}(2\,T)=0,\ k=0,\dots,n-1.
		\end{array}\right.\end{equation}
		\vspace*{-5pt}
	\item Dirichlet problem on the interval $[0,2\,T]$:
		\vspace*{-5pt}
	\begin{equation}\label{e-D-2T-2n}\tag{$D,\,2\,T$} \left\{\begin{array}{l} \widetilde{L}\,u(t)=\tilde \sigma(t),\quad \mbox{ a. e.   }\; t\in J,\\[5pt]
			u^{(2\,k)}(0)=u^{(2\,k)}(2\,T)=0,\ k=0,\dots,n-1.
		\end{array}\right.\end{equation}
		\vspace*{-5pt}
	\item Periodic problem on the interval $[0,4\,T]$:
		\vspace*{-5pt}
	\begin{equation}\label{e-P-4T-2n}\tag{$P,\,4\,T$} \left\{\begin{array}{l} \widetilde{\widetilde{L}}\,u(t)=\tilde{\tilde \sigma}(t),\quad \mbox{ a. e.   }\; t\in [0,4\,T],\\[5pt]
			u^{(k)}(0)=u^{(k)}(4\,T),\ k=0,\dots,2n-1.
		\end{array}\right.\end{equation}
\end{itemize}

Now, we will show how to relate the expressions of different Green's functions. The following argument is analogous to the one made in \cite{CabCidSom2016} for the case $n=1$. 


\subsection{Neumann Problem}
To begin with, we will decompose the Green's function related to problem \eqref{e-N-2n} as sum of the Green's function related to \eqref{e-P-2T-2n} evaluated in the same point and of the same function evaluated in another point which satisfies a symmetric relation.

First, suppose that operator $L$ is nonresonant in the space
\[X_{N,T}=\left\{u\in W^{2n,1}(I): \   u^{(2k+1)}(0)=u^{(2k+1)}(T)=0, \ k=0,\dots,n-1\right\}. \]

Moreover, assume that $\widetilde L$ is nonresonant in 
\[X_{P,2T}=\left\{u\in W^{2n,1}(J): \   u^{(k)}(0)=u^{(k)}(2\,T), \ k=0,\dots,2n-1\right\}. \]



Then, if we denote by $G_N[T]$ and $G_P[2\,T]$ the Green's functions related to problems \eqref{e-N-2n} and \eqref{e-P-2T-2n}, respectively, reasoning as in \cite{CabCidSom2016}, we can deduce that
%
	\begin{equation}\label{e-Green-NP-2n}
		G_N[T](t,s)=G_P[2\,T](t,s)+G_P[2\,T](2\,T-t,s) \qquad \forall \; (t,s)\in I \times I.
	\end{equation}
	
	Previous expression lets us obtain the exact value at every point of the Green's function for the Neumann problem by means of the values of the periodic one, as long as both Green's functions exist. 
	
	Analogously, assuming that $\widetilde{L}$ is nonresonant in
	\[X_{N,2T}=\left\{u\in W^{2n,1}(J): \   u^{(2k+1)}(0)=u^{(2k+1)}(2T)=0, \ k=0,\dots,n-1\right\},\]
	and denoting by $G_N[2\,T]$ the Green's function related to \eqref{e-N-2T-2n}, we deduce that
	\begin{equation}\label{e-Green-NN-2n}
		G_N[T](t,s)=G_N[2\,T](t,s)+G_N[2\,T](2\,T-t,s) \qquad \forall \; (t,s)\in I \times I,
	\end{equation}
	or, using \eqref{e-Green-NP-2n},
	\begin{equation}\label{e-Green-NP-4T-2n}
		G_N[T](t,s)=G_P[4\,T](t,s)+G_P[4\,T](4\,T-t,s) +G_P[4\,T](2\,T-t,s)+G_P[4\,T](2\,T+t,s).
	\end{equation}
	
	\subsection{Dirichlet Problem}
	Now, we will do an analogous decomposition for the Green's function related to problem \eqref{e-D-2n}.
	
	To this end, we will assume that operator $L$ is nonresonant in 
	\[X_{D,T}=\left\{u\in W^{2n,1}(I): \   u^{(2k)}(0)=u^{(2k)}(T)=0, \ k=0,\dots,n-1\right\}. \]
	
	Again, we will also assume that $\widetilde L$ is nonresonant in $X_{P,2T}$.
	
	Now, denoting by $G_D[T]$ the Green's function related to \eqref{e-D-2n}, we obtain:
	\begin{equation}\label{e-Green-DP-2n}
		G_D[T](t,s)=G_P[2\,T](t,s)-G_P[2\,T](2\,T-t,s) \qquad \forall \; (t,s)\in I \times I.
	\end{equation}
	
	On the other hand, assuming that $\widetilde{L}$ is nonresonant in 
	\[X_{D,2T}=\left\{u\in W^{2n,1}(J): \   u^{(2k)}(0)=u^{(2k)}(2T)=0, \ k=0,\dots,n-1\right\}, \]
and denoting by $G_D[2\,T]$ the Green's function related to \eqref{e-D-2T-2n}, we obtain that
	\begin{equation}\label{e-Green-DD-2n}
		G_D[T](t,s)=G_D[2\,T](t,s)-G_D[2\,T](2\,T-t,s) \qquad \forall \; (t,s)\in I \times I
	\end{equation}
	or, using \eqref{e-Green-DP-2n},
	\begin{equation}\label{e-Green-DP-4T-2n}
		G_D[T](t,s)=G_P[4\,T](t,s)-G_P[4\,T](4\,T-t,s) -G_P[4\,T](2\,T-t,s)+G_P[4\,T](2\,T+t,s). 
	\end{equation}
	
	\subsection{Mixed Problems}
	The same arguments of two previous sections are applicable to problems \eqref{e-M1-2n} and \eqref{e-M2-2n}, by assuming the nonresonant character of operator $L$ in 
	
	\vspace*{-15pt}
	\[X_{M_1,T}=\left\{u\in W^{2n,1}(I): \   u^{(2k+1)}(0)=u^{(2k)}(T)=0, \ k=0,\dots,n-1\right\}\]
	\vspace{-8pt}
	or
	\[X_{M_2,T}=\left\{u\in W^{2n,1}(I): \   u^{(2k)}(0)=u^{(2k+1)}(T)=0, \ k=0,\dots,n-1\right\},\]
	respectively. However, these problems will not be related to periodic ones but to \eqref{e-A-2T-2n}. Therefore, we will assume for both cases that operator $\widetilde{L}$ is nonresonant in \[X_{A,2T}=\left\{u\in W^{2n,1}(J): \   u^{(k)}(0)=-u^{(k)}(2T), \ k=0,\dots,2\,n-1\right\}.\]
	Then  we arrive at the following decompositions:
	\begin{equation}\label{e-Green-M1A-2n}
		G_{M_1}[T](t,s)=G_A[2\,T](t,s)-G_A[2\,T](2\,T-t,s) \qquad \forall \; (t,s)\in I \times I,
	\end{equation}
	\begin{equation}\label{e-Green-M2A-2n}
		G_{M_2}[T](t,s)=G_A[2\,T](t,s)+G_A[2\,T](2\,T-t,s) \qquad \forall \; (t,s)\in I \times I.
	\end{equation}
	Again, $G_{M_1}[T]$ and $G_{M_2}[T]$ can also be related to $G_N[2T]$ and $G_D[2T]$, respectively:
	\begin{equation}\label{e-Green-M1N-2n}
		G_{M_1}[T](t,s)=G_N[2\,T](t,s)-G_N[2\,T](2\,T-t,s) \qquad \forall \; (t,s)\in I \times I,
	\end{equation}
	\begin{equation}\label{e-Green-M2D-2n}
		G_{M_2}[T](t,s)=G_D[2\,T](t,s)+G_D[2\,T](2\,T-t,s) \qquad \forall \; (t,s)\in I \times I,
	\end{equation}
	or, using \eqref{e-Green-NP-2n} and \eqref{e-Green-DP-2n},
	\begin{equation}\label{e-Green-M1P-4T-2n}
		G_{M_1}[T](t,s)=G_P[4\,T](t,s)+G_P[4\,T](4\,T-t,s) -G_P[4\,T](2\,T-t,s)-G_P[4\,T](2\,T+t,s), 
	\end{equation}
	\begin{equation}\label{e-Green-M2P-4T-2n}
		G_{M_2}[T](t,s)=G_P[4\,T](t,s)-G_P[4\,T](4\,T-t,s) +G_P[4\,T](2\,T-t,s)-G_P[4\,T](2\,T+t,s).
	\end{equation}
	
	\vspace*{2pt}
	On the other hand, it is also possible to obtain a direct relation between the Green's functions of the two mixed problems. 
	
	Consider the following operator defined from $L$ by taking the reflection of the coefficients
	\vspace*{-8pt}
	\begin{equation*}\begin{split}
			\check{L}\,u(t) & 
			= u^{(2n)}(t)+\sum_{k=0}^{2n-1}(-1)^k\,a_k(T-t)\,u^{(k)}(t),
				\vspace*{-20pt}
		\end{split}\end{equation*}
		for all $t\in I$, and let $\check{G}_{M_2}[T]$ be the Green's function related to the Mixed problem 2 associated with $\check{L}$, namely,
		\begin{equation}\label{e-M2-checkL}\left\{\begin{split}
				& \check{L}\,u(t)=\check{\sigma}(t), \quad t\in I,\\[2pt]
				& u^{(2k)}(0)=u^{(2k+1)}(T)=0, \quad k=0,\dots,n-1.
			\end{split}\right.\end{equation}

		Now, let $u$ be the unique solution of problem \eqref{e-M1-2n}.		
		If we define $v(t)=u(T-t)$, it is easy to check that $v$ is a solution of problem \eqref{e-M2-checkL} for the particular case of taking $\check{\sigma}(t)=\sigma(T-t)$. Therefore,
		\[v(t)=\int_{0}^{T} \check{G}_{M_2}[T](t,s)\,\sigma(T-s)\, \dif s= \int_{0}^{T} \check{G}_{M_2}[T](t,T-s)\,\sigma(s)\, \dif s.\]
		
		On the other hand, 
				\vspace*{-5pt}
		\[v(t)=u(T-t)=\int_{0}^{T} G_{M_1}[T](T-t,s)\,\sigma(s)\, \dif s. \]
		
		\vspace*{-2pt}
		Since previous equalities are valid for all $\sigma \in \Lsp{1}(I)$, we deduce that
		\[G_{M_1}[T](T-t,s)=\check{G}_{M_2}[T](t,T-s) \]
		or, which is the same,
		\vspace*{-8pt}
		\begin{equation}\label{e-M1M2-1-2n}
			G_{M_1}[T](T-t,T-s)=\check{G}_{M_2}[T](t,s).
		\end{equation}
		
		\vspace*{-2pt}
		Analogously, if we denote by $\check{G}_{M_1}[T]$ the Green's function related to the Mixed problem 1 associated with $\check{L}$, namely,
		\vspace*{-5pt}
		\begin{equation}\label{e-M1-checkL}
			\left\{\begin{split}
				& \check{L}\,u(t)=\check{\sigma}(t), \quad t\in I,\\
				& u^{(2k+1)}(0)=u^{(2k)}(T)=0, \quad k=0,\dots,n-1,
			\end{split}\right.\end{equation} 
			
		\vspace*{-5pt}
		\noindent repeating the previous reasoning, we reach to the following connecting expression
		\begin{equation}\label{e-M1M2-2-2n}
			G_{M_2}[T](T-t,T-s)=\check{G}_{M_1}[T](t,s).
		\end{equation}
		
		\subsection{Connecting relations between different problems}
		On the other hand, assuming again the nonresonant character of all the considered operators in the corresponding spaces, if we sum different combinations of the previous equalities, we obtain more connecting expressions between the considered Green's functions. The results are the following:
		\begin{itemize}
			\item From \eqref{e-Green-NP-2n} and \eqref{e-Green-DP-2n}, we deduce:
			\begin{equation}\label{e-Green-DN-P-2n}\begin{split}
					G_P[2\,T](t,s)&=1/2\left(G_N[T](t,s)+G_D[T](t,s)\right) \qquad \forall \; (t,s)\in I \times I, \\
					G_P[2\,T](2\,T-t,s)&=1/2 \left(G_N[T](t,s)-G_D[T](t,s)\right) \qquad \forall \; (t,s)\in I \times I.
				\end{split}\end{equation}
				
				\item From \eqref{e-Green-M1A-2n} and \eqref{e-Green-M2A-2n}:
				\begin{equation}\label{e-Green-M1M2-A-2n}\begin{split}
						G_A[2\,T](t,s)&=1/2 \left(G_{M_2}[T](t,s)+G_{M_1}[T](t,s)\right) \qquad \forall \; (t,s)\in I \times I,\\
						G_A[2\,T](2\,T-t,s)&=1/2 \left(G_{M_2}[T](t,s)-G_{M_1}[T](t,s)\right) \qquad \forall \; (t,s)\in I \times I.
					\end{split}\end{equation}
					
					\vspace*{-5pt}
					\item From \eqref{e-Green-NN-2n} and \eqref{e-Green-M1N-2n}:
					\vspace*{-3pt}
					\begin{equation}\label{e-Green-NM1-N-2n}\begin{split}
							G_N[2\,T](t,s)&=1/2\left(G_N[T](t,s) +G_{M_1}[T](t,s)\right) \qquad \forall \; (t,s)\in I \times I, \\
							G_N[2\,T](2\,T-t,s)&=1/2\left(G_N[T](t,s)-G_{M_1}[T](t,s)\right) \qquad \forall \; (t,s)\in I \times I.
						\end{split}\end{equation}
						
						\vspace*{-5pt}
						\item From \eqref{e-Green-DD-2n} and \eqref{e-Green-M2D-2n}:
						\vspace*{-3pt}
						\begin{equation}\label{e-Green-DM2-D-2n}\begin{split}
								G_D[2\,T](t,s)&=1/2\left(G_{M_2}[T](t,s) +G_D[T](t,s)\right) \qquad \forall \; (t,s)\in I \times I, \\
								G_D[2\,T](2\,T-t,s)&=1/2\left(G_{M_2}[T](t,s)-G_{D}[T](t,s)\right) \qquad \forall \; (t,s)\in I \times I.
							\end{split}\end{equation}
							
							\vspace*{-5pt}
							\item From \eqref{e-Green-NP-4T-2n}, \eqref{e-Green-DP-4T-2n}, \eqref{e-Green-M1P-4T-2n} and \eqref{e-Green-M2P-4T-2n}:
							\vspace*{-3pt}
							\begin{equation*}
								G_P[4\,T](t,s)=1/4\left(G_N[T](t,s) + G_D[T](t,s)+ G_{M_1}[T](t,s) +G_{M_2}[T](t,s)\right).
							\end{equation*}	
						\end{itemize}

\section{Decomposition of the Spectra}\label{S-decomp-spectra}
In this section we will show first how the spectra of the considered problems can be connected. The results here generalise those proved in \cite{CabCidSom2016}.
						
We will denote by $\Lambda_N[T]$, $\Lambda_D[T]$, $\Lambda_{M_1}[T]$, $\Lambda_{M_2}[T]$, $\Lambda_P[2\,T]$, $\Lambda_A[2\,T]$, $\Lambda_N[2\,T]$, $\Lambda_D[2\,T]$ and $\Lambda_P[4\,T]$ the set of eigenvalues of problems \eqref{e-N-2n}, \eqref{e-D-2n}, \eqref{e-M1-2n}, \eqref{e-M2-2n}, \eqref{e-P-2T-2n}, \eqref{e-A-2T-2n}, \eqref{e-N-2T-2n}, \eqref{e-D-2T-2n} and \eqref{e-P-4T-2n}, respectively. Then, arguing as in \cite{CabCidSom2016}, we obtain the following equalities:			
\[\Lambda_N[T] \cup \Lambda_D[T] = \Lambda_P[2\,T],\]
\[\Lambda_N[T]\cup \Lambda_{M_1}[T] =\Lambda_N[2T],\]	
\[\Lambda_D[T]\cup \Lambda_{M_2}[T]=\Lambda_D[2T],\]	
\[\Lambda_{M_1}[T]\cup \Lambda_{M_2}[T]=\Lambda_A[2T],\]	
\[\Lambda_N[T]\cup \Lambda_D[T] \cup \Lambda_{M_1}[T] \cup \Lambda_{M_2}[T]= \Lambda_P[4T].\]

						Finally, if we denote by $\check{\Lambda}_{M_2}[T]$ and $\check{\Lambda}_{M_1}[T]$ the set of eigenvalues of problems \eqref{e-M2-checkL} and \eqref{e-M1-checkL}, respectively, from \eqref{e-M1M2-1-2n} and \eqref{e-M1M2-2-2n} we deduce that
						\[\Lambda_{M_1}[T]=\check{\Lambda}_{M_2}[T] \quad \text{and} \quad \Lambda_{M_2}[T]=\check{\Lambda}_{M_1}[T]. \]

						As an immediate consequence we have the following result.
						\begin{corollary}
							If $a_k(t)=(-1)^k a_k(T-t)$ for all $k=0,\dots,2n-1$, then the spectra of the two mixed problems coincides, that is,
							\[\Lambda_{M_1}[T]=\Lambda_{M_2}[T]. \]
						\end{corollary}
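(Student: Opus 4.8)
The plan is to observe that the hypothesis $a_k(t)=(-1)^k a_k(T-t)$ is precisely the condition under which the reflected operator $\check L$ coincides with $L$; once that identification is made, the corollary falls out of the two connecting relations \eqref{e-M1M2-1-2n} and \eqref{e-M1M2-2-2n} already established above.

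Concretely, I would start from the definition
\[\check L\,u(t)=u^{(2n)}(t)+\sum_{k=0}^{2n-1}(-1)^k\,a_k(T-t)\,u^{(k)}(t),\qquad t\in I,\]
and substitute the assumption, rewritten as $a_k(T-t)=(-1)^k a_k(t)$. Since $(-1)^k(-1)^k=1$ for every $k$, each coefficient $(-1)^k a_k(T-t)$ of $\check L$ equals $a_k(t)$, and therefore $\check L\equiv L$ as operators on $W^{2n,1}(I)$ (note that the reflection $t\mapsto T-t$ preserves $\Lsp{\alpha}(I)$, so no regularity is lost). As a consequence the auxiliary problems \eqref{e-M1-checkL} and \eqref{e-M2-checkL} are, literally, problems \eqref{e-M1-2n} and \eqref{e-M2-2n} themselves, and the same holds after adding $\lambda$ to $a_0$ since the shift $a_0\mapsto a_0+\lambda$ commutes with the reflection. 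Hence $\check\Lambda_{M_1}[T]=\Lambda_{M_1}[T]$ and $\check\Lambda_{M_2}[T]=\Lambda_{M_2}[T]$.

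It then only remains to chain the identities: using $\Lambda_{M_1}[T]=\check\Lambda_{M_2}[T]$ (derived above from \eqref{e-M1M2-1-2n}) together with $\check\Lambda_{M_2}[T]=\Lambda_{M_2}[T]$, we get $\Lambda_{M_1}[T]=\Lambda_{M_2}[T]$, which is the assertion. I do not expect any genuine obstacle here; the only point deserving a word of care is that the nonresonance hypotheses required to invoke \eqref{e-M1M2-1-2n}–\eqref{e-M1M2-2-2n} must hold, but since $\check L=L$ these are exactly the standing assumptions on $L$ already in force, so nothing extra is needed.
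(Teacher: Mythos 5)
Your proposal is correct and follows exactly the paper's intended argument: the hypothesis $a_k(t)=(-1)^k a_k(T-t)$ makes $\check L\equiv L$, so $\check\Lambda_{M_2}[T]=\Lambda_{M_2}[T]$, and the conclusion follows from the already established relation $\Lambda_{M_1}[T]=\check\Lambda_{M_2}[T]$. Your additional remarks on the $\lambda$-shift and nonresonance are accurate but not essential; nothing further is needed.
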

						
						Moreover, if we denote by $\lambda_0^N[T]$, $\lambda_0^D[T]$, $\lambda_0^{M_1}[T]$, $\lambda_0^{M_2}[T]$, $\lambda_0^P[2\,T]$, $\lambda_0^A[2\,T]$, $\lambda_0^N[2\,T]$, $\lambda_0^D[2\,T]$ and $\lambda_0^P[4\,T]$ the first eigenvalue of problems \eqref{e-N-2n}, \eqref{e-D-2n}, \eqref{e-M1-2n}, \eqref{e-M2-2n}, \eqref{e-P-2T-2n}, \eqref{e-A-2T-2n}, \eqref{e-N-2T-2n}, \eqref{e-D-2T-2n} and \eqref{e-P-4T-2n}, respectively, from the connecting expressions proved in Section \ref{S-Decomp-Green}, we deduce the relations below.
						
						\begin{theorem}\label{t-order-eigen-2n} Assume that all the previously considered spectra are not empty, the first eigenvalue of each problem is simple and its related eigenfunction has constant sign. Then, the following equalities are fulfilled for any $a_0,\dots,a_{2n-1} \in \Lsp{1}(I)$:
							\begin{enumerate}
								\item $\lambda_0^N[T]=\lambda_0^P[2\,T]< \lambda_0^D[T]$.
								\item $\lambda_0^N[T]= \lambda_0^N[2\,T]< \lambda_0^{M_1}[T]$.
								\item $\lambda_0^N[T]=\lambda_0^P[4\,T]$.
								\item $\lambda_0^{M_2}[T]=\lambda_0^D[2\,T]< \lambda_0^D[T]$.
								\item $\lambda_0^N[T]< \lambda_0^{M_2}[T]$.
								\item $\lambda_0^A[2\,T]=\min\left\{\lambda_0^{M_1}[T], \, \lambda_0^{M_2}[T] \right\}$.
								
							\end{enumerate}
						\end{theorem}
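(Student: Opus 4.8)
The plan is to combine the spectral set decompositions recalled just above with one structural observation: the eigenfunction of the first eigenvalue of each symmetric ``double interval'' problem is symmetric about the midpoint of the interval, hence restricts to an eigenfunction of a distinguished ``half interval'' problem. From $\Lambda_P[2\,T]=\Lambda_N[T]\cup\Lambda_D[T]$ one reads off $\lambda_0^P[2\,T]=\min\{\lambda_0^N[T],\lambda_0^D[T]\}$, and likewise $\lambda_0^N[2\,T]=\min\{\lambda_0^N[T],\lambda_0^{M_1}[T]\}$, $\lambda_0^D[2\,T]=\min\{\lambda_0^D[T],\lambda_0^{M_2}[T]\}$, $\lambda_0^A[2\,T]=\min\{\lambda_0^{M_1}[T],\lambda_0^{M_2}[T]\}$ and $\lambda_0^P[4\,T]=\min\{\lambda_0^N[T],\lambda_0^D[T],\lambda_0^{M_1}[T],\lambda_0^{M_2}[T]\}$. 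The fourth of these is exactly~(6), so there nothing more is needed; for the remaining items the task is to decide which term realizes the minimum and then to turn the resulting inequality into a strict one.

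I would first establish the parity statement. Let $\mu$ be the (simple) first eigenvalue of one of $(P,2\,T)$, $(N,2\,T)$, $(D,2\,T)$ and $\varphi$ the associated eigenfunction. The underlying Banach space is invariant under $v\mapsto v(2\,T-\cdot)$ (the boundary conditions of the $2\,T$-problems are symmetric under this reflection), and, reasoning exactly as in the proof of Lemma~\ref{l-sim-G-2n}---where one uses that $\tilde{a}_{2k}$ is even and $\hat{a}_{2k+1}$ odd about $T$---the function $\varphi(2\,T-\cdot)$ again solves $\widetilde{L}[\mu]\,u=0$ in that space. By simplicity, $\varphi(2\,T-t)=c\,\varphi(t)$ with $c^{2}=1$, and the constant sign of $\varphi$ rules out $c=-1$: that would force $\varphi\equiv 0$ on $[T,2\,T]$ and then, by uniqueness for the Cauchy problem of the linear equation $\widetilde{L}[\mu]\,u=0$, $\varphi\equiv 0$ on $J$. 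Hence $\varphi$ is symmetric about $T$, so $\varphi^{(2k+1)}(T)=0$ for $k=0,\dots,n-1$; together with the boundary conditions at $0$ this shows that $\varphi|_{[0,T]}$---which is nontrivial, since it cannot vanish on $[0,T]$ without vanishing on all of $J$---is an eigenfunction for $\mu$ of the Neumann problem on $[0,T]$ when one starts from $(P,2\,T)$ or $(N,2\,T)$, and of $(M_2,T)$ when one starts from $(D,2\,T)$. Thus $\lambda_0^P[2\,T],\lambda_0^N[2\,T]\in\Lambda_N[T]$ and $\lambda_0^D[2\,T]\in\Lambda_{M_2}[T]$; combined with the inclusions $\Lambda_N[T]\subseteq\Lambda_P[2\,T]\cap\Lambda_N[2\,T]$ and $\Lambda_{M_2}[T]\subseteq\Lambda_D[2\,T]$, this gives $\lambda_0^N[T]=\lambda_0^P[2\,T]=\lambda_0^N[2\,T]$ and $\lambda_0^{M_2}[T]=\lambda_0^D[2\,T]$.

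For the strict inequalities I would argue by multiplicity. If $\lambda_0^N[T]=\lambda_0^D[T]=:\mu$, then $\mu=\lambda_0^P[2\,T]$, and besides the symmetric eigenfunction of $(P,2\,T)$ obtained by extending the Neumann eigenfunction evenly about $T$, extending the Dirichlet eigenfunction oddly about $T$ produces an antisymmetric eigenfunction of $(P,2\,T)$ for $\mu$; these are linearly independent, contradicting the simplicity of $\lambda_0^P[2\,T]$. The same construction---even or odd reflection about $T$ of the two pertinent half-interval eigenfunctions---yields a double eigenvalue of $(N,2\,T)$ if $\lambda_0^N[T]=\lambda_0^{M_1}[T]$, and of $(D,2\,T)$ if $\lambda_0^{M_2}[T]=\lambda_0^D[T]$, which settles~(1), (2) and~(4). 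Finally, the whole argument leading to~(1) applies equally to the operator $\widetilde{L}$ on $[0,2\,T]$ and its periodic extension $\widetilde{\widetilde{L}}$ on $[0,4\,T]$: $\Lambda_P[4\,T]=\Lambda_N[2\,T]\cup\Lambda_D[2\,T]$ follows from the listed identities, Lemma~\ref{l-sim-G-2n} holds for $\widetilde{\widetilde{L}}$ since it is built from $\widetilde{L}$ just as $\widetilde{L}$ is built from $L$, and the needed simplicity and constant-sign hypotheses are among the assumptions. That ``one level up'' form of~(1) reads $\lambda_0^N[2\,T]=\lambda_0^P[4\,T]<\lambda_0^D[2\,T]$; combining it with the equalities already obtained gives $\lambda_0^N[T]=\lambda_0^N[2\,T]=\lambda_0^P[4\,T]$, which is~(3), and $\lambda_0^N[T]=\lambda_0^N[2\,T]<\lambda_0^D[2\,T]=\lambda_0^{M_2}[T]$, which is~(5).

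The main difficulty I anticipate is not the strategy but the bookkeeping in the reflection constructions: one must verify, case by case, that the even (respectively odd) extension about $T$ of a half-interval eigenfunction is $C^{2n-1}$ across $t=T$---which boundary derivatives must vanish there depends on the parity of the order and on the boundary condition at hand---that the extension satisfies the equation governed by $\widetilde{L}$, which is precisely where the even/odd extension of the even/odd-order coefficients is used, and that it lies in the prescribed Banach space. One also invokes uniqueness for the linear Cauchy problem to discard a constant-sign antisymmetric eigenfunction, and, as the statement tacitly does, reads ``first eigenvalue'' as the minimum of the (nonempty) spectrum of each problem.
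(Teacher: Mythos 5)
Your proof is correct, and it uses the same basic toolkit as the paper (the spectral decompositions $\Lambda_P[2T]=\Lambda_N[T]\cup\Lambda_D[T]$, etc., together with even/odd reflections about $T$ and the simplicity and constant sign of the first eigenfunctions), but the mechanism for the key equalities runs in the opposite direction. The paper works ``upward'': it extends the first Neumann eigenfunction on $[0,T]$ evenly to a constant-sign periodic eigenfunction on $J$ and the first Dirichlet eigenfunction oddly to a sign-changing one, and then uses the assumption that the first eigenfunction of the doubled problem has constant sign (plus simplicity) to conclude $\lambda_0^N[T]=\lambda_0^P[2T]<\lambda_0^D[T]$; Assertions 2 and 4 are analogous, and 3, 5, 6 follow formally exactly as you do them. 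You instead work ``downward'': you first prove that the first eigenfunction of each doubled problem is symmetric about $T$ (reflection invariance plus simplicity gives $\varphi(2T-t)=c\,\varphi(t)$ with $c=\pm1$, and constant sign excludes $c=-1$ -- note that antisymmetry plus constant sign kills $\varphi$ on all of $J$ outright, so the appeal to Cauchy uniqueness is unnecessary), restrict it to $[0,T]$ to get $\lambda_0^P[2T],\lambda_0^N[2T]\in\Lambda_N[T]$ and $\lambda_0^D[2T]\in\Lambda_{M_2}[T]$, and obtain strictness by exhibiting two independent (even and odd) extensions, contradicting simplicity. Both routes need the same hypotheses and the same bookkeeping about $C^{2n-1}$ matching at $T$ and the parity of the extended coefficients; your variant has the small additional payoff of showing that the first eigenfunction of each doubled problem is itself symmetric, while the paper's variant is marginally shorter because it never needs to symmetrize the doubled eigenfunction.
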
 
						
						\begin{proof}
							Assertion $1$ is proved in the following way: as we have seen above, the spectrum of \eqref{e-P-2T-2n} is decomposed as $\Lambda_P[2\,T]=\Lambda_N[T] \cup \Lambda_D[T]$, which implies that
							\[\lambda_0^P[2\,T]=\min\left\{\lambda_0^N[T], \, \lambda_0^D[T] \right\} . \]
Consider now the even extension to $J$ of the eigenfunction associated to $\lambda_0^N[T]$. This extension has constant sign on $J$ and, moreover, it satisfies periodic boundary conditions, so it is a constant sign eigenfunction of \eqref{e-P-2T-2n}. On the contrary, the odd extension to $J$ of the eigenfunction associated to $\lambda_0^D[T]$ is a sign changing eigenfunction of \eqref{e-P-2T-2n}. Thus, since the eigenfunction related to the first eigenvalue of each problem has constant sign, we deduce that $\lambda_0^N[T]=\lambda_0^P[2\,T]<\lambda_0^D[T]$. 
							
							An analogous argument is valid to prove Assertion $2$, by taking into account that $\Lambda_N[2\,T]=\Lambda_N[T] \cup \Lambda_{M_1}[T]$.
							
							Assertion $3$ is deduced from the two previous one. Indeed, Assertion $1$ implies that $\lambda_0^N[2\,T]=\lambda_0^P[4\,T]$ and, from Assertion $2$, we deduce the equality.
							
							Assertion $4$ is proved analogously to Assertions $1$ and $2$, taking into account the decomposition $\Lambda_D[2\,T]=\Lambda_D[T] \cup \Lambda_{M_2}[T]$.
							
							Now Assertion $5$ can be deduced from $1$, $2$ and $4$. Indeed, Assertion $1$ implies that $\lambda_0^N[2\,T]< \lambda_0^D[2\,T]$ and, using Assertions $2$ and $4$, $\lambda_0^N[T]=\lambda_0^N[2\,T]< \lambda_0^D[2\,T]=\lambda_0^{M_2}[T]$.
							
							Finally, Assertion $6$ is immediate from  $\Lambda_A[2\,T]=\Lambda_{M_1}[T]\cup \Lambda_{M_2}[T].$
						\end{proof}
						
						\begin{remark}
							With respect to the hypothesis that all the considered spectra are not empty note that, as a consequence of the relations proved at the beginning of this section, if one of those spectra is not empty, we could ensure that some others are not empty too.
							
							On the other hand, there are several results which ensure that, under some suitable conditions, the first eigenvalue of a boundary value problem is simple and its related eigenfunction has constant sign, for instance, Krein-Rutman Theorem.
							
							Sufficient conditions to ensure that all the hypotheses required in previous theorem are fulfilled can be found in \cite{Karlin64}. First, we can deduce from Theorem~1 in such reference that if there exists some $\lambda$ for which the Green's function $G[\lambda,T]$ has constant sign and the spectrum of such problem is not empty, then the eigenfunction related to the first eigenvalue has constant sign. 
							
							Moreover, from Theorem~2 in \cite{Karlin64} it is deduced that if there exists some $\lambda$ for which the Green's function $G[\lambda,T]$ has strict constant sign on $[0,T]\times (0,T)$ then the spectrum of such problem is not empty, the first eigenvalue is simple and its related eigenfunction has strict constant sign. 
							
							Finally, from Theorem~2' in \cite{Karlin64} we can ensure that if there exists some $\lambda$ for which $G[\lambda,T]$ has strict constant sign on $(0,T)\times (0,T)$ and there exists a continuous function $\phi$, positive on $(0,T)$, such that \[\frac{G[\lambda,T](t,s)}{\phi(t)}\] is continuous on $[0,T]\times [0,T]$ and positive on $[0,T]\times (0,T)$, then the spectrum of such problem is not empty, the first eigenvalue is simple and its related eigenfunction has strict constant sign. 
							
							Analogously, if conditions given in Lemmas~\ref{l-M-NT-2n} or \ref{l-M-PT-2n} hold for some $\lambda$, then we are also able to deduce that the spectrum of such problem is not empty, the first eigenvalue is simple and its related eigenfunction has constant sign. Details of this can be seen in \cite{Cablibro}, where it is proved that Lemmas~\ref{l-M-NT-2n} or \ref{l-M-PT-2n} imply that Krein-Rutman's Theorem holds.
							
							Finally, we must note that, since the eigenfunctions of the considered problems are related, the constant sign of the eigenfunction associated with the first eigenvalue of a problem implies (in some cases) the constant sign of the eigenfunction of other problems.
						\end{remark}
						
\section{Constant sign of Green's functions}
In \cite{CabCidSom2016} and \cite[Section 3.4]{CabCidSomLibro}, for $n=1$, some results relating the constant sign of various Green's functions have been proved. The result is the following:
			\begin{theorem}\cite[Corollary 4.8]{CabCidSom2016} \label{t-G-signo-cte-2n}
				For $n=1$ and $a_1\equiv 0$, the following properties hold:
		\begin{enumerate}
		\item $G_P[2\,T]<0$ on $J \times J$ if and only if $G_N[T]<0$ on $I \times I$. This is equivalent to $G_N[2\,T]<0$ on $J\times J$.
								\item $G_P[2\,T]> 0$ on $(0,2\,T) \times (0,2\,T)$ if and only if $G_N[T]> 0$ on $(0,T) \times (0,T)$.
						\item If $G_N[2\,T]> 0$ on $(0,2\,T) \times (0,2\,T)$ then $G_N[T]>0$ on $(0,T) \times (0,T)$.
								\item If $G_P[2\,T]<0$ on $J \times J$ then $G_D[2\,T]<0$ on $(0,2\,T) \times (0,2\,T)$.
				\item If $G_P[2\,T]> 0$ on $(0,2\,T) \times (0,2\,T)$ then $G_D[2\,T]<0$ on $(0,2\,T) \times (0,2\,T)$.
								\item If $G_N[T]$ (or $G_P[2\,T]$) has constant sign on $I\times I$, then $G_D[T]<0$ on $(0,T) \times (0,T)$, $G_{M_1}[T]<0$ on $[0,T)\times[0,T)$ and $G_{M_2}[T]<0$ on $(0,T] \times (0,T]$.
								\item $G_D[2\,T]<0$ on $(0,2\,T) \times (0,2\,T)$ if and only if $G_{M_2}[T]< 0$ on $(0,T]\times (0,T]$.
								\item If $G_{M_2}[T]<0$ on $(0,T]\times (0,T]$ or $G_{M_1}[T]<0$ on $[0,T) \times [0,T)$ then $G_D[T]<0$ on $(0,T) \times (0,T)$.
							\end{enumerate}
						\end{theorem}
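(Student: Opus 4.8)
The plan is to derive every item from the interplay of three things already at hand: the $\pm$\,-decomposition identities of Section~\ref{S-Decomp-Green}, the eigenvalue equalities and strict inequalities of Theorem~\ref{t-order-eigen-2n}, and --- and this is precisely what confines the statement to $n=1$, $a_1\equiv0$ --- the Sturm--Hill oscillation theory, through which I would first pin down the exact $\lambda$\,-range on which each Green's function keeps its sign. For the scalar equation $u''+(a_0(t)+\lambda)\,u=0$, using Lemmas~\ref{l-M-NT-2n} and \ref{l-M-PT-2n}, the simplicity and constant sign of the ground eigenfunction (the Krein--Rutman discussion following Theorem~\ref{t-order-eigen-2n}; see also \cite{Karlin64,Cablibro}) and Sturm separation, one obtains: $G[\lambda,\cdot]<0$ exactly for $\lambda$ below the first eigenvalue of the \emph{same} problem, and $G[\lambda,\cdot]>0$ exactly for $\lambda$ strictly between the first eigenvalue of the same problem and the upper threshold $\bar\mu$ of Lemma~\ref{l-M-PT-2n}, which oscillation theory identifies with the first eigenvalue of the ``immediate refinement'' of that problem (namely $\lambda_0^A[2\,T]$ for the periodic problem on $[0,2\,T]$, $\min\{\lambda_0^{M_1}[T],\lambda_0^{M_2}[T]\}$ for the Neumann problem on $[0,T]$, and so on). The decisive point is that Theorem~\ref{t-order-eigen-2n} forces these ranges to coincide for the problems paired in each item.

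With that in place, one half of each equivalence is purely algebraic. From \eqref{e-Green-NP-2n}, $G_P[2\,T]<0$ on $J\times J$ forces $G_N[T](t,s)=G_P[2\,T](t,s)+G_P[2\,T](2\,T-t,s)<0$ on $I\times I$ since $(2\,T-t,s)\in J\times J$; reading \eqref{e-Green-NN-2n} the same way gives $G_N[2\,T]<0\Rightarrow G_N[T]<0$, and the positivity halves of items~2 and 3 come from \eqref{e-Green-NP-2n} and \eqref{e-Green-NN-2n} restricted to the open squares (one only checks that $(2\,T-t,s)$ stays in the relevant open set). The Dirichlet and mixed negativity statements of items~5--8 follow analogously from \eqref{e-Green-DP-2n}, \eqref{e-Green-M1A-2n}, \eqref{e-Green-M2A-2n}, \eqref{e-Green-M1N-2n} and \eqref{e-Green-M2D-2n} together with the symmetry $G[2\,T](t,s)=G[2\,T](2\,T-t,2\,T-s)$ of Lemma~\ref{l-sim-G-2n}; for instance, for item~6, if $G_N[T]$ has constant sign it is $\le0$, hence $0<\lambda_0^N[T]$, so Theorem~\ref{t-order-eigen-2n}(1),(2),(5) gives $0<\lambda_0^D[T]$, $0<\lambda_0^{M_1}[T]$, $0<\lambda_0^{M_2}[T]$, whence $G_D[T]<0$, $G_{M_1}[T]<0$, $G_{M_2}[T]<0$, the half-open squares $[0,T)\times[0,T)$ and $(0,T]\times(0,T]$ being dictated by which endpoint carries a derivative condition.

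The converse (harder) halves go through the spectrum. If $G_N[T]<0$, its mere existence rules out $0\in\Lambda_N[T]$, so $0<\lambda_0^N[T]=\lambda_0^P[2\,T]=\lambda_0^N[2\,T]$ by Theorem~\ref{t-order-eigen-2n}(1),(2); since $0$ then lies in the negativity range of both the $(P,2\,T)$ and $(N,2\,T)$ problems, $G_P[2\,T]<0$ and $G_N[2\,T]<0$, which is item~1. For item~2, $G_N[T]>0$ on $(0,T)\times(0,T)$ puts $0$ in the positivity range $(\lambda_0^N[T],\min\{\lambda_0^{M_1}[T],\lambda_0^{M_2}[T]\})$, which by Theorem~\ref{t-order-eigen-2n}(1),(6) equals $(\lambda_0^P[2\,T],\lambda_0^A[2\,T])$, the positivity range of $(P,2\,T)$, so $G_P[2\,T]>0$ on $(0,2\,T)\times(0,2\,T)$. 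Items~4 and 5 amount to placing $0$ below $\lambda_0^D[2\,T]$, using $\lambda_0^P[2\,T]<\lambda_0^{M_2}[T]=\lambda_0^D[2\,T]$ (Theorem~\ref{t-order-eigen-2n}(4),(5)) in the first case and $\lambda_0^A[2\,T]\le\lambda_0^D[2\,T]$ (Theorem~\ref{t-order-eigen-2n}(4),(6)), together with the fact that the positivity of $G_P[2\,T]$ ends no later than $\lambda_0^A[2\,T]$, in the second; item~7's ``only if'' is the Dirichlet-on-$[0,2\,T]$ analogue of item~1 via $\lambda_0^{M_2}[T]=\lambda_0^D[2\,T]$; and both implications of item~8 use $\max\{\lambda_0^{M_1}[T],\lambda_0^{M_2}[T]\}<\lambda_0^D[T]$ (from Theorem~\ref{t-order-eigen-2n}(4) and the interlacing of Hill's periodic and antiperiodic eigenvalues).

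The genuine content --- and the main obstacle --- is the very first step: establishing, for $n=1$, the precise $\lambda$\,-interval on which each Green's function keeps its sign (in particular identifying the threshold $\bar\mu$ of Lemma~\ref{l-M-PT-2n} with the first eigenvalue of a neighbouring problem) and showing these intervals match across the problems compared in each item. This is where Sturm separation/comparison and the structure of Hill's discriminant do the real work, and it is also the source of the one-sidedness of item~3 --- the implication $G_N[2\,T]>0\Rightarrow G_N[T]>0$ holds but does \emph{not} reverse, because the positivity interval of $(N,2\,T)$ terminates strictly below that of $(N,T)$ --- exactly the kind of phenomenon the introduction flags as deteriorating for $n>1$. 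I expect the fussiest parts to be the strict-positivity items~(2),(3), where one must show that a Green's function loses positivity \emph{exactly} at the first eigenvalue of its refinement rather than merely somewhere below it, and the identification of the half-open squares for the mixed problems, where the Green's function on the boundary piece carrying the Neumann-type condition must be tracked rather than discarded as in the Dirichlet case. Since the statement is quoted verbatim from \cite[Corollary~4.8]{CabCidSom2016}, one may alternatively just invoke that reference, after checking that the identities of Section~\ref{S-Decomp-Green} reduce, for $n=1$ and $a_1\equiv0$, to those used there.
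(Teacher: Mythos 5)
The paper does not prove Theorem \ref{t-G-signo-cte-2n}: it is imported verbatim from \cite[Corollary 4.8]{CabCidSom2016} and used as a known result, so the paper's own ``proof'' is exactly the citation you offer in your final sentence; to that extent your proposal and the paper coincide.

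Viewed as a free-standing reconstruction, your plan does follow the strategy of the cited work --- the decomposition identities of Section \ref{S-Decomp-Green}, the spectral identifications behind Theorem \ref{t-order-eigen-2n}, and constant-sign characterizations in $\lambda$ in the spirit of Lemmas \ref{l-M-NT-2n} and \ref{l-M-PT-2n} --- but the step that carries all the weight is asserted rather than proved: that for $n=1$ each Green's function keeps its sign precisely on a $\lambda$-interval whose upper endpoint is the first eigenvalue of the adjacent (``refined'') problem, i.e.\ the identification of $\bar\mu$ with $\lambda_0^A[2\,T]$, $\min\{\lambda_0^{M_1}[T],\lambda_0^{M_2}[T]\}$, and so on, together with the strict-sign behaviour on the open and half-open squares at and below those thresholds. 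You rightly call this ``the genuine content'', but it is exactly where the proof in \cite{CabCidSom2016} lives; without it, items 2, 3, 5, 7 and 8 are not established by your sketch. Two smaller points: in item 6 you reduce to ``$G_N[T]$ of constant sign is $\le 0$'', which discards the nonnegative case that item 6 also covers (and which is the delicate one, since there $0$ lies above $\lambda_0^N[T]$ and must still be placed strictly below $\lambda_0^{M_1}[T]$, $\lambda_0^{M_2}[T]$ and $\lambda_0^D[T]$); and the inequality $\lambda_0^{M_1}[T]<\lambda_0^D[T]$ invoked for items 6 and 8 is not among the assertions of Theorem \ref{t-order-eigen-2n} --- it is a genuinely second-order fact requiring its own Sturm-comparison argument, consistent with the counterexample to Assertion 8 exhibited in Section 4 for $n>1$.
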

						
Some of the previous results can be extended to the more general case considered in this paper, although some of them are no longer true. We will show now these results and some counterexamples for the cases which do not hold anymore.
						
						From \eqref{e-Green-NP-2n}, \eqref{e-Green-NN-2n} and \eqref{e-Green-M2D-2n} we deduce that
						\begin{corollary}\label{c-relac-sign-Green-2n} The following properties are fulfilled for any $a_0,\dots,a_{2n-1} \in \Lsp{1}(I)$:
							\begin{enumerate}
								\item If $G_P[2T]\le 0$ on $J\times J$, then $G_N[T]\le 0$ on $I\times I$.
								\item If $G_P[2T]\ge 0$ on $J\times J$, then $G_N[T]\ge 0$ on $I\times I$.
								\item If $G_N[2T]\le 0$ on $J\times J$, then $G_N[T]\le 0$ on $I\times I$.
								\item If $G_N[2T]\ge 0$ on $J\times J$, then $G_N[T]\ge 0$ on $I\times I$.
								\item If $G_D[2T]\le 0$ on $J\times J$, then $G_{M_2}[T]\le 0$ on $I\times I$.
								\item If $G_D[2T]\ge 0$ on $J\times J$, then $G_{M_2}[T]\ge 0$ on $I\times I$.
							\end{enumerate}
						\end{corollary}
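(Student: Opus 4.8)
The plan is to read off each of the six assertions directly from the three decomposition formulas cited just before the statement, using only the elementary fact that a sum of two quantities of the same (weak) sign keeps that sign.

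First I would fix an arbitrary pair $(t,s)\in I\times I$ and note that $2\,T-t\in[T,2\,T]$, so that both $(t,s)$ and $(2\,T-t,s)$ belong to $J\times J$. This is the only geometric observation needed: it guarantees that every term appearing on the right-hand sides of \eqref{e-Green-NP-2n}, \eqref{e-Green-NN-2n} and \eqref{e-Green-M2D-2n} is evaluated at a point of $J\times J$, where the sign hypotheses are assumed to hold.

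For Assertions~1 and~2 I would apply \eqref{e-Green-NP-2n}, that is $G_N[T](t,s)=G_P[2\,T](t,s)+G_P[2\,T](2\,T-t,s)$: if $G_P[2\,T]\le 0$ (resp. $\ge 0$) on $J\times J$, then both summands are $\le 0$ (resp. $\ge 0$), whence $G_N[T](t,s)$ has the same sign; since $(t,s)$ was arbitrary the conclusion on $I\times I$ follows. Assertions~3 and~4 are obtained in exactly the same way from \eqref{e-Green-NN-2n} with $G_P[2\,T]$ replaced by $G_N[2\,T]$, and Assertions~5 and~6 from \eqref{e-Green-M2D-2n} with $G_N[T]$ replaced by $G_{M_2}[T]$ and the periodic (or $2\,T$-Neumann) function replaced by $G_D[2\,T]$.

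I do not expect any genuine \emph{obstacle} here: the whole content is the sign-chasing above, together with the remark that the decomposition identities are available under the standing nonresonance assumptions. The only item worth a sentence is that the short-interval Green's function is guaranteed to exist as soon as the long-interval one does, since a nontrivial homogeneous solution on $[0,T]$ satisfying the relevant boundary conditions, suitably extended to $J$, would yield a nontrivial periodic (resp. Neumann, resp. Dirichlet) solution on $J$, contradicting nonresonance of $\widetilde{L}$; with this the hypotheses of the corollary are self-consistent and the proof reduces to three one-line applications of the decomposition formulas.
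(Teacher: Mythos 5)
Your proposal is correct and follows essentially the same route as the paper, which deduces the corollary directly from the decompositions \eqref{e-Green-NP-2n}, \eqref{e-Green-NN-2n} and \eqref{e-Green-M2D-2n} by the same sign-chasing of the two summands evaluated at $(t,s)$ and $(2\,T-t,s)\in J\times J$. Your additional remark that nonresonance on the short interval follows from nonresonance on $J$ via even/odd extensions is a correct refinement, but not needed in the paper, which simply assumes nonresonance of all the operators involved.
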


	The reciprocal of Assertions 1 and 2 in Corollary~\ref{c-relac-sign-Green-2n} holds for constant coefficients. This occurs as a consequence of the following property:
					\begin{lemma}\cite[Section 1.4]{Cablibro} \label{l-coeff-const-2n}
							Let $L_n\,u(t)\equiv u^{(n)}(t)+a_{n-1}(t)\,u_{n-1}(t) +\dots + a_1(t)\, u'(t)+a_0(t)\,u(t)$, $t\in I $
							be a $n$-th order linear operator and let $G_P[T]$ denote the Green's function related to the periodic problem
							\begin{equation*}\left\{\begin{split}
									& L_n\,u(t)=0, \quad t\in I,\\
									& u^{(k)}(0)=u^{(k)}(T), \quad k=0,\dots,n-1.
								\end{split}\right.\end{equation*}
							
							If the coefficients $a_k$, $k=0,\dots,n-1$, involved in the definition of operator $L_n$ are constant, then the Green's function is constant over the straight lines of slope one, that is, it satisfies the following property
							\begin{equation*}G_P[T](t,s)=\left\{\begin{array}{ll}
									G_P[T](t-s,0), & 0\le s\le t\le T, \\[4pt]
									G_P[T](T+t-s,0), & \text{otherwise.}
								\end{array}\right.
							\end{equation*}
						\end{lemma}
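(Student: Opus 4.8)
The plan is to exploit two facts: a constant–coefficient operator is translation invariant, and the periodic Green's function is the kernel of the (unique) solution operator of the periodic problem. Together these force $G_P[T]$ to be invariant under simultaneous translation of both of its arguments along the circle $\mathbb{R}/T\mathbb{Z}$, and constancy along lines of slope one is just the special case in which the second argument is moved to $0$. It is convenient to reformulate the periodic problem on $[0,T]$ as an equation for $T$-periodic functions on $\mathbb{R}$: a function $u$ on $[0,T]$ satisfies $u^{(k)}(0)=u^{(k)}(T)$, $k=0,\dots,n-1$, and $L_n u=\sigma$ a.e., exactly when its $T$-periodic extension, together with the $T$-periodic extension of $\sigma$, satisfies $L_n u=\sigma$ a.e. on $\mathbb{R}$. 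Hence, for $\sigma\in\Lsp{1}(I)$ extended $T$-periodically, the unique periodic solution $u$ is itself $T$-periodic and $u(t)=\int_0^T G_P[T](t,s)\,\sigma(s)\,\dif s$ for every $t$.

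\textbf{Main step: the translation identity.} Fix $c\in\mathbb{R}$ and put $v(t):=u(t-c)$. Since the coefficients $a_k$ are constant, $L_n v(t)=(L_n u)(t-c)=\sigma(t-c)$ a.e., and $v$ is again $T$-periodic; by uniqueness $v$ is the periodic solution associated with the datum $\sigma(\cdot-c)$, so $v(t)=\int_0^T G_P[T](t,s)\,\sigma(s-c)\,\dif s$ for $t\in[0,T)$. On the other hand, using the $T$-periodicity of $u$ and writing $(x)_T$ for the representative of $x$ in $[0,T)$ modulo $T$, one also has $v(t)=u\bigl((t-c)_T\bigr)=\int_0^T G_P[T]\bigl((t-c)_T,s\bigr)\,\sigma(s)\,\dif s$. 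In the first integral I would change variables $r=s-c$ and fold the integral back onto $[0,T)$ using that $\sigma$ is $T$-periodic (on an interval of length $T$ each residue class modulo $T$ meets the domain of integration exactly once), which rewrites it as $\int_0^T G_P[T]\bigl(t,(s+c)_T\bigr)\,\sigma(s)\,\dif s$. Since $\sigma\in\Lsp{1}(I)$ is arbitrary and Green's functions are bounded, equating the two expressions for $v$ gives $G_P[T]\bigl((t-c)_T,s\bigr)=G_P[T]\bigl(t,(s+c)_T\bigr)$ for a.e. $s$ and every $t\in[0,T)$; replacing $t$ by $(t+c)_T$ yields
\[
G_P[T](t,s)=G_P[T]\bigl((t+c)_T,(s+c)_T\bigr)\qquad\text{for all }c\in\mathbb{R}.
\]

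\textbf{Conclusion.} Taking $c=-s$ in this identity gives $G_P[T](t,s)=G_P[T]\bigl((t-s)_T,0\bigr)$. Unfolding the modular reduction finishes the proof: if $0\le s\le t\le T$ then $(t-s)_T=t-s$, so $G_P[T](t,s)=G_P[T](t-s,0)$, while if $0\le t<s\le T$ then $(t-s)_T=t-s+T$, so $G_P[T](t,s)=G_P[T](T+t-s,0)$, which is exactly the asserted formula. The a.e. identities upgrade to the stated pointwise ones off the diagonal by continuity of $G_P[T]$ there, and on the diagonal and at the endpoints via the appropriate one-sided limits.

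\textbf{Expected difficulty.} The conceptual content is entirely in the translation invariance of the autonomous operator combined with uniqueness of the periodic solution, which is short. The only place that needs genuine care is the bookkeeping in the change of variables and the periodic folding of the integral — in particular, applying the reduction modulo $T$ consistently inside the arguments of $G_P[T]$ — together with the routine but necessary remark that equality of the two integral operators for all $\Lsp{1}$ data forces equality of their kernels.
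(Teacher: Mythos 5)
The paper does not prove this lemma at all: it is quoted verbatim from \cite[Section 1.4]{Cablibro}, so there is no in-paper argument to compare yours with. Judged on its own, your proof is correct and is the standard route for this fact: identify the periodic problem on $[0,T]$ with the problem for $T$-periodic functions on $\mathbb{R}$, use that a constant-coefficient operator commutes with translations together with the uniqueness of the periodic solution (i.e.\ the implicit nonresonance assumption under which $G_P[T]$ exists) to get $G_P[T](t,s)=G_P[T]\bigl((t+c)_T,(s+c)_T\bigr)$, and then specialize $c=-s$. Two small points of care, neither of which is a real gap: first, the identity you extract from equality of the integral operators holds, for each fixed $t$ and $c$, only for a.e.\ $s$, with the exceptional null set depending on $c$; you should therefore invoke the continuity of $G_P[T]$ (on $I\times I$ for $n\ge 2$, on each closed triangle for $n=1$) to upgrade it to a pointwise identity \emph{before} making the $c$-dependent substitution $c=-s$, rather than at the very end as you phrase it. Second, it is worth stating explicitly that the matching of derivatives up to order $n-1$ at $0$ and $T$ is exactly what makes the $T$-periodic extension lie in $W^{n,1}_{\mathrm{loc}}(\mathbb{R})$, so that $L_n$ applied to the shifted function makes sense a.e.; with those remarks your argument is complete.
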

						
						As a consequence, we arrive at the following result.
						\begin{theorem}
							If all the coefficients $a_0,\dots,a_{2n-1}$ of operator $L$ defined in \eqref{e-L-2n} are constant, then the following properties hold:
							\begin{enumerate}
								\item $G_P[2T]\le 0$ on $J\times J$ if and only if $G_N[T]\le 0$ on $I\times I$.
								\item $G_P[2T]\ge 0$ on $J\times J$ if and only if $G_N[T]\ge 0$ on $I\times I$.
							\end{enumerate}
						\end{theorem}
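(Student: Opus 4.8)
The plan is the following. The two ``only if'' implications come for free: they are exactly items~1 and~2 of Corollary~\ref{c-relac-sign-Green-2n}, valid for any $L^1$ coefficients, so the whole content of the theorem is the two converses, namely that $G_N[T]\le 0$ on $I\times I$ forces $G_P[2\,T]\le 0$ on $J\times J$, and likewise with every ``$\le$'' replaced by ``$\ge$''. These two converses are proved by one and the same computation (throughout, all the operators involved are assumed nonresonant so that the Green's functions exist), so I would run the argument once for ``$\le$'' and then repeat it verbatim with the inequalities reversed, using Corollary~\ref{c-relac-sign-Green-2n}(2) instead of (1).

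The heart of the matter is to replace the two-variable function $G_P[2\,T]$ by a one-variable one. Since all coefficients of $L$ are constant, the periodic problem \eqref{e-P-2T-2n} has constant coefficients, so Lemma~\ref{l-coeff-const-2n} applies: $G_P[2\,T]$ is constant along the lines of slope one. Setting $g(r):=G_P[2\,T](r,0)$ for $r\in[0,2\,T]$, this means $G_P[2\,T](t,s)=g(t-s)$ when $0\le s\le t\le 2\,T$ and $G_P[2\,T](t,s)=g(2\,T+t-s)$ when $0\le t<s\le 2\,T$; in particular $G_P[2\,T]\le 0$ on $J\times J$ if and only if $g\le 0$ on $[0,2\,T]$. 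Now I would feed this into the Neumann--periodic identity \eqref{e-Green-NP-2n}, $G_N[T](t,s)=G_P[2\,T](t,s)+G_P[2\,T](2\,T-t,s)$ for $(t,s)\in I\times I$, evaluated on the two slices $t=T$ and $t=0$ on which both terms on the right collapse onto the same value of $g$. At $t=T$ one has $2\,T-t=T\ge s$ for all $s\in[0,T]$, so both arguments lie in the first branch and $G_N[T](T,s)=2\,g(T-s)$; at $t=0$ one has $2\,T-t=2\,T\ge s$ for all $s\in(0,T]$, and $G_N[T](0,s)=2\,g(2\,T-s)$. Hence $g(r)=\frac12\,G_N[T](T,T-r)$ for $r\in[0,T]$ and $g(r)=\frac12\,G_N[T](0,2\,T-r)$ for $r\in(T,2\,T]$, the value at $r=2\,T$ being recovered by letting $r\to 2\,T$ and invoking the continuity of $G_N[T]$ on $I\times I$.

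It follows that if $G_N[T]\le 0$ on $I\times I$ then $g\le 0$ on all of $[0,2\,T]$, hence $G_P[2\,T]\le 0$ on $J\times J$; together with Corollary~\ref{c-relac-sign-Green-2n}(1) this is assertion~1, and the same reasoning with ``$\ge$'' gives assertion~2. The step I expect to require the most care is the bookkeeping around Lemma~\ref{l-coeff-const-2n}: one must pick the slices $t=T$ and $t=0$ precisely because there the inequality $2\,T-t\ge s$ holds for every admissible $s$, so that the slope-one formula is read off on the correct branch and each of the two summands of \eqref{e-Green-NP-2n} genuinely reduces to the same $g(\,\cdot\,)$; checking that the two formulas for $g$ agree at the overlap $r=T$ and patch at the endpoint $r=2\,T$ is then a routine continuity/periodicity check.
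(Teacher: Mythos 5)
Your proposal is correct and essentially reproduces the paper's proof: the ``only if'' parts are exactly items 1 and 2 of Corollary~\ref{c-relac-sign-Green-2n}, and the converses follow, as in the paper, from Lemma~\ref{l-coeff-const-2n} combined with the decomposition \eqref{e-Green-NP-2n}, which lets every value of $G_P[2\,T]$ on $J\times J$ be written as one half of a value of $G_N[T]$ on $I\times I$. The only minor difference is that you evaluate \eqref{e-Green-NP-2n} on the slices $t=T$ and $t=0$, where the two summands coincide automatically (so you avoid the symmetry relation \eqref{e-l-sim-G-2n}), and you argue directly rather than by the paper's contrapositive on the slice $s=0$, which uses \eqref{e-l-sim-G-2n} together with the periodicity of $G_P[2\,T]$; the two arguments are equivalent.
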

						
						\begin{proof}
							From Corollary~\ref{c-relac-sign-Green-2n}, the Assertion is equivalent to prove that if $G_P[2\,T]$ changes sign, then $G_N[T]$ will also change sign. Indeed, assume that there exist two pairs of values $(t_1,s_1)$ and $(t_2,s_2)$ such that $G_P[2\,T](t_1,s_1)<0$ and $G_P[2\,T](t_2,s_2)>0$. As $G_P[2\,T](t,s)=G_P[2\,T](s,t)$ for all $(t,s)\in J\times J$, we may assume, without loss of generality, that $s_1\le t_1$ and $s_2\le t_2$.
							
							If all the coefficients $a_0,\dots,a_{2n-1}$ are constant then, from Lemma \ref{l-coeff-const-2n}, it holds that 
							\begin{equation*}G_P[2\,T](t,s)=\left\{\begin{array}{ll}
									G_P[2\,T](t-s,0), & 0\le s\le t\le 2\,T, \\[4pt]
									G_P[2\,T](2\,T+t-s,0), & \text{otherwise.}
								\end{array}\right.
							\end{equation*}
							
							Therefore, it is fulfilled that
							\[G_P[2\,T](t_1,s_1) =G_P[2\,T](t_1-s_1,0) \quad \text{and} \quad G_P[2\,T](t_2,s_2) =G_P[2\,T](t_2-s_2,0).\]
							
							On the other hand, from equality \eqref{e-l-sim-G-2n} and the fact that the Green's function satisfies the periodic boundary conditions (see \cite[Definition 1.4.1]{Cablibro}), it holds that 
							\[G_P[2\,T](t_1-s_1,0)=G_P[2\,T](2\,T-t_1+s_1,2\,T)= G_P[2\,T](2\,T-t_1+s_1,0),\]
							\[G_P[2\,T](t_2-s_2,0)=G_P[2\,T](2\,T-t_2+s_2,2\,T)=G_P[2\,T](2\,T-t_2+s_2,0).\]
							
							Now, we will distinguish two possibilities:
							\begin{itemize}
								\item If $t_1-s_1\le T$, then
								\begin{equation*}\begin{split}
										G_N[T](t_1-s_1,0)&=G_P[2\,T](t_1-s_1,0)+G_P[2\,T](2\,T-t_1+s_1,0)\\
										&=2\,G_P[2\,T](t_1-s_1,0)<0.
									\end{split}\end{equation*}
									\item When $t_1-s_1>T$, we have
									\begin{equation*}\begin{split}
											G_N[T](2\,T-t_1+s_1,0)&=G_P[2\,T](2\,T-t_1+s_1,0)+G_P[2\,T](t_1-s_1,0)\\
											&=2\,G_P[2\,T](t_1-s_1,0)<0.
										\end{split}\end{equation*}
									\end{itemize}
									Analogously, if $t_2-s_2\le T$, then 
									\begin{equation*}\begin{split}
											G_N[T](t_2-s_2,0)&=2\,G_P[2\,T](t_2-s_2,0)>0,
										\end{split}\end{equation*}
										and, if $t_2-s_2> T$, then 
										\begin{equation*}\begin{split}
												G_N[T](2\,T-t_2+s_2,0)&=2\,G_P[2\,T](t_2-s_2,0)>0.
											\end{split}\end{equation*}
											It is clear that, in any of the cases, $G_N[T]$ changes its sign and the result holds.
										\end{proof}

										The following counterexample shows that the converse of Assertion 2 in Corollary \ref{c-relac-sign-Green-2n} is not true in general for nonconstant coefficients.
										\begin{example}{Example}
											Consider the Neumann problem on $[0,T]=\left[0,\,2\right]$ related to operator 
											\begin{equation}\label{ex-N-NP-2n}\begin{split}
													& L\,u(t)=u^{(4)}(t)+((t-2)^4+\lambda)\,u(t), \quad t\in\left[0,\,2\right],
												\end{split}\end{equation}
												and the periodic problem on $[0,2\,T]=[0,4]$ related to 
												\begin{equation}\label{ex-P-NP-2n}\begin{split}
														& \widetilde{L}\,u(t)\equiv u^{(4)}(t)+((t-2)^4+\lambda)\,u(t), \quad t\in\left[0,\,4\right].
													\end{split}\end{equation}
													
													By numerical approach, it can be seen that $G_N[T]$ is nonpositive for $\lambda\in \left(\lambda_1,\,\lambda_0^N[T] \right)$, where $\lambda_1\approx -2.26$ and $\lambda_0^N[T]=\lambda_0^P[2\,T]\approx -1.746$. Moreover it is nonnegative for $\lambda\in \left( \lambda_0^N[T],\, \lambda_2 \right)$, with $\lambda_2\approx 4.11$. However, $G_P[2\,T]$ is nonpositive for $\lambda\in \left(\lambda_1,\,\lambda_0^P[2\,T] \right)$ and nonnegative for $\lambda\in \left( \lambda_0^P[2\,T],\, \lambda_3\right)$, with  $\lambda_3\approx 5.95$. 
													
													Despite this, we remark that the interval of values of $\lambda$ for which $G_N[T]$ and $G_P[2\,T]$ are nonpositive is exactly the same.
												\end{example}
												
												\begin{remark}
													It must be pointed out that the converse of Assertion 2 in Corollary \ref{c-relac-sign-Green-2n} also holds for several examples with non constant coefficients. However we have not been able to prove the existence of any general condition under which this Assertion holds.
													
													Furthermore, up to this moment, we have not been able to find a counterexample for the converse of Assertion 1. So, it remains as an open problem to know if Assertion 1 is or not an equivalence for $n\ge 2$.
												\end{remark}
												
												The following counterexample shows that the converse of Assertions 3 and 4 in Corollary \ref{c-relac-sign-Green-2n} does not hold in general, not even in the constant case: 
												
												\begin{example}{Example}
													Consider the following Neumann problem with constant coefficients on $[0,T]=\left[0,\,\frac{3}{2}\right]$ related to the following operator
													\begin{equation*}
														\begin{split}
															L\,u(t)\equiv u^{(4)}(t)+\lambda\,u(t), \quad t\in\left[0,\,\frac{3}{2}\right],
														\end{split}\end{equation*}
														and the Neumann problem  on $[0,2\,T]=[0,3]$ related to
														\begin{equation*}
															\begin{split}
																\widetilde{L}\,u(t)\equiv u^{(4)}(t)+\lambda\,u(t), \quad t\in\left[0,\,3\right],
															\end{split}\end{equation*}
															
															By numerical approach, it can be seen that $G_N[T]$ is nonpositive for $\lambda\in \left(\lambda_4,\,\lambda_0^N[T] \right)$, with $\lambda_4\approx -6.1798$ and $\lambda_0^N[T]=\lambda_0^N[2\,T]=0$, and nonnegative for $\lambda\in \left( \lambda_0^N[T],\, \lambda_5 \right)$, with $\lambda_5\approx 24.7192$. However, $G_N[2\,T]$ is nonpositive for $\lambda\in \left(\lambda_6,\,\lambda_0^N[2\,T] \right)$, with $\lambda_6\approx -0.3862$, and nonnegative for $\lambda\in \left( \lambda_0^N[2\,T],\, \lambda_7 \right)$, with $\lambda_7\approx 1.5449$.
															
															So, the converse of Assertions 3 and 4 does not hold for these operators.
														\end{example}

														%

														The following counterexample shows that the converse of Assertions 5 and 6 in Corollary \ref{c-relac-sign-Green-2n} is not true in general, not even in the constant case:
														\begin{example}{Example}
															Consider the Mixed problem 2 with constant coefficients on $[0,T]=\left[0,\,1\right]$ related to operator 
															\begin{equation*}
																\begin{split}
																	& L\,u(t)\equiv u^{(4)}(t)+\lambda\,u(t), \quad t\in\left[0,\,1\right],
																\end{split}\end{equation*}
																and the Dirichlet problem  on $[0,2\,T]=[0,2]$ related to
																\begin{equation*}
																	\begin{split}
																		& \widetilde{L}\,u(t)\equiv u^{(4)}(t)+\lambda\,u(t), \quad t\in\left[0,\,2\right].
																	\end{split}\end{equation*}
																	
																	It can be seen that $G_{M_2}[T]$ is nonpositive for $\lambda\in \left(\lambda_8,\,\lambda_0^{M_2}[T] \right)$, with $\lambda_8\approx -31.2852 $ and $\lambda_0^{M_2}[T] =\lambda_0^D[2\,T]=-\frac{\pi^4}{16}$, and nonnegative for $\lambda\in \left( \lambda_0^{M_2}[T],\, \lambda_9\right)$, with $\lambda_9\approx 389.6365$. However, $G_D[2\,T]$ is nonpositive for $\lambda\in \left(\lambda_{10},\,\lambda_0^D[2\,T] \right)$, where $\lambda_{10}\approx -14.8576$, and nonnegative for $\lambda\in \left(\lambda_0^D[2\,T],\,\lambda_{11}\right)$, with $\lambda_{11}\approx 59.4303$.
																\end{example}
																
																%
																
																We will see now some more counterexamples which show that Assertions 4, 5, 6 and 8 in Theorem~\ref{t-G-signo-cte-2n} do not hold, in general, for $n>1$.
																
																Next example shows that Assertions 4 and 5 in Theorem~\ref{t-G-signo-cte-2n} are not true in general.
																
																\begin{example}{Example}
																	Consider the periodic and Dirichlet problems on the same interval $[0,2\,T]=[0,3]$ related to operator
																	\begin{equation}\label{ex-P-PD-2n}
																		\begin{split}
																			\widetilde{L}\,u(t) \equiv u^{(4)}(t)+(t\,(t-3)+\lambda)\,u(t), \quad t\in [0,3].
																		\end{split} \end{equation}
																		
																		By numerical approach, we have obtained that for $\lambda=-1.5$, $G_P[2\,T]$ is negative while $G_D[2\,T]$ changes its sign on $J\times J$. 
																		
																		Moreover, for $\lambda=15$, $G_P[2\,T]$ is positive while $G_D[2\,T]$ changes sign again. 
																	\end{example}

																	We will see in the two following examples that none of the implications given in Assertion 6 in Theorem~\ref{t-G-signo-cte-2n} holds for $n>1$.
																	
																	\begin{example}{Example}
																		Consider now $[0,T]=[0,2]$ and operators $L$ and $\widetilde{L}$ given in \eqref{ex-N-NP-2n} and \eqref{ex-P-NP-2n}.
																		
																		For $\lambda=-2$, one can check that both $G_P[2\,T]$ and $G_N[T]$ are nonpositive, meanwhile $G_D[T]$ and $G_{M_1}[T]$ are nonnegative. 
																		
																		For $\lambda=2$, it occurs that  both $G_P[2\,T]$ and $G_N[T]$ are nonnegative, meanwhile $G_D[T]$, $G_{M_1}[T]$ and $G_{M_2}[T]$ are nonnegative. 
																	\end{example}
																	
																	\begin{example}{Example}
																		Take now $[0,T]=\left[0,\frac{3}{2}\right]$, operator $L$ given by
																		\begin{equation*}
																			L\,u (t)\equiv u^{(4)}(t)+\left(t(t-3)+\lambda\right), \quad t\in \left[0,\frac{3}{2}\right]
																		\end{equation*}	
																		and operator $\widetilde{L}$ given in \eqref{ex-P-PD-2n}.
																		
																		In this case, for $\lambda=1.5$, it occurs that $G_P[2\,T]$ and $G_N[T]$ are nonpositive, meanwhile $G_{M_2}[T]$ is nonnegative.
																	\end{example}

																	Finally, we will show that Assertion 8 in Theorem~\ref{t-G-signo-cte-2n} does not hold either when $n>1$.
																	
																	\begin{example}{Example}
																		Consider again $[0,T]=[0,2]$  and operators $L$ and $\widetilde{L}$ given in \eqref{ex-N-NP-2n} and \eqref{ex-P-NP-2n}.
																		
																		In this case, for $\lambda=-6$, $G_{M_1}[T]$ is nonpositive but $G_D[T]$ is nonnegative.
																		
																		Similarly, for $\lambda=-2$, $G_{M_2}[T]$ is nonpositive but $G_D[T]$ is nonnegative.
																	\end{example}
																	
																	Finally, from the relations given in Theorem~\ref{t-order-eigen-2n}, together with the general characterization given in Lemmas \ref{l-M-NT-2n} and \ref{l-M-PT-2n}, it can be deduced the following corollary.
																	
																	\begin{corollary}
																		Assume that we are in conditions to apply Lemmas~\ref{l-M-NT-2n} and \ref{l-M-PT-2n}, that is, all the considered Green's functions $G[\lambda,T]$ (or $G[\lambda,2\,T]$, $G[\lambda,4\,T]$, with the suitable subscript for each case) are:
																		\begin{itemize}
																			\item nonpositive on $I \times I$ if and only if $\lambda\in (-\infty, \lambda_1)$ or  $\lambda\in [-\bar{\mu}, \lambda_1)$, with $\lambda_1 >0$ the first eigenvalue of operator $L_n$ coupled with the corresponding boundary conditions and ${\bar{\mu} \ge 0}$ such that $L_n[-\bar{\mu}]$ is nonresonant on $X$ and the related nonpositive Green's function $G[-\bar{\mu}]$ vanishes at some point of the square $I\times I$.
																			\item nonnegative on $I \times I$ if and only if $\lambda\in (\lambda_1,\infty)$ or  $\lambda\in (\lambda_1, \bar{\mu}]$, with $\lambda_1 <0$ the first eigenvalue of operator $L_n$ coupled with the corresponding boundary conditions and $\bar{\mu} \ge 0$ such that $L_n[\bar{\mu}]$ is nonresonant on $X$ and the related nonnegative Green's function $G[\bar{\mu}]$ vanishes at some point of the square $I\times I$.
																		\end{itemize} 
																		
																		Then the following relations between the constant sign of Green's functions are valid for any $a_0,\dots,a_{2n-1}\in \Lsp{1}(I)$:
																		\begin{itemize}
																			\item If $G_N[T]$ is nonpositive on $I\times I$, then $G_D[T]$, $G_{M_1}[T]$ and $G_{M_2}[T]$ either change sign or are nonpositive on $I\times I$.	
																			\item If $G_N[2\,T]$ is nonpositive on $J\times J$, then $G_N[T]$, $G_D[T]$, $G_{M_1}[T]$ and $G_{M_2}[T]$ either change sign or are nonpositive on $I\times I$.	
																			\item If $G_P[2\,T]$ is nonpositive on $J\times J$, then $G_N[T]$, $G_D[T]$, $G_{M_1}[T]$ and $G_{M_2}[T]$ either change sign or are nonpositive on $I\times I$.
																			\item If $G_P[4\,T]$ is nonpositive on $[0,4\,T]\times [0,4\,T]$, then $G_N[T]$, $G_D[T]$, $G_{M_1}[T]$ and $G_{M_2}[T]$ either change sign or are nonpositive on $I\times I$.
																			\item If $G_{M_2}[T]$ is nonpositive on $I\times I$, then $G_D[T]$ either changes sign or is nonpositive on $I\times I$.
																			\item If $G_D[2\,T]$ is nonpositive on $J\times J$, then $G_D[T]$ and $G_{M_2}[T]$ either change sign or are nonpositive on $I\times I$.
																		\end{itemize}
																	\end{corollary}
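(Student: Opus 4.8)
The plan is to convert each hypothesis into a statement about the \emph{sign of a first eigenvalue} and then propagate it through the ordering of first eigenvalues established in Theorem~\ref{t-order-eigen-2n}. As a preliminary remark, since by assumption the conditions of Lemmas~\ref{l-M-NT-2n} and~\ref{l-M-PT-2n} are available for every problem in the list, the Remark following Theorem~\ref{t-order-eigen-2n} guarantees that Krein--Rutman's theorem applies to each of them; in particular all the spectra are nonempty, every first eigenvalue is simple and its eigenfunction has constant sign, so the hypotheses of Theorem~\ref{t-order-eigen-2n} are met and its six identities/inequalities are at our disposal.

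The core of the argument is the following dichotomy, valid for each problem $(Q)$ in the list: \emph{if the first eigenvalue $\lambda_0^{Q}$ is strictly positive, then the Green's function $G_{Q}$ of $(Q)$ (that is, the one at $\lambda=0$) either changes sign or is nonpositive on the corresponding square.} Indeed, were $G_{Q}$ nonnegative there, the ``nonnegative'' half of the characterization assumed in the statement (i.e. Lemma~\ref{l-M-PT-2n} applied to $(Q)$) would force $\lambda_0^{Q}<0$, contradicting $\lambda_0^{Q}>0$; hence $G_{Q}$ takes a strictly negative value somewhere, and a real function which is somewhere strictly negative either also takes a strictly positive value (so it changes sign) or is everywhere $\le 0$ (so it is nonpositive). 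Dually, the ``nonpositive'' half of the characterization (Lemma~\ref{l-M-NT-2n} applied to $(Q)$) tells us that if $G_{Q}$ is nonpositive then $\lambda_0^{Q}>0$; this is how each of the six hypotheses gets turned into positivity of a first eigenvalue.

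With these two facts the six assertions are immediate. From $G_N[T]\le 0$ we get $\lambda_0^N[T]>0$, and Theorem~\ref{t-order-eigen-2n}(1),(2),(5) give $\lambda_0^D[T]>\lambda_0^N[T]>0$, $\lambda_0^{M_1}[T]>\lambda_0^N[T]>0$ and $\lambda_0^{M_2}[T]>\lambda_0^N[T]>0$; applying the dichotomy to $(D,T)$, $(M_1,T)$ and $(M_2,T)$ gives the first bullet. For the second, third and fourth bullets, $G_N[2T]\le 0$, $G_P[2T]\le 0$ and $G_P[4T]\le 0$ yield respectively $\lambda_0^N[2T]>0$, $\lambda_0^P[2T]>0$ and $\lambda_0^P[4T]>0$; by Theorem~\ref{t-order-eigen-2n}(2),(1),(3) each of these equals $\lambda_0^N[T]$, so again $\lambda_0^N[T]>0$, and now the dichotomy applied to $(N,T)$ itself together with $(D,T)$, $(M_1,T)$, $(M_2,T)$ (exactly as in the first bullet) gives the conclusion. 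For the fifth bullet, $G_{M_2}[T]\le 0$ gives $\lambda_0^{M_2}[T]>0$, whence $\lambda_0^D[T]>\lambda_0^{M_2}[T]>0$ by Theorem~\ref{t-order-eigen-2n}(4), and the dichotomy applied to $(D,T)$ finishes it. For the last bullet, $G_D[2T]\le 0$ gives $\lambda_0^D[2T]>0$, and Theorem~\ref{t-order-eigen-2n}(4) reads $\lambda_0^{M_2}[T]=\lambda_0^D[2T]<\lambda_0^D[T]$, so both $\lambda_0^{M_2}[T]>0$ and $\lambda_0^D[T]>0$, and the dichotomy applied to $(M_2,T)$ and $(D,T)$ completes the proof.

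I expect the only genuine difficulties to be of a bookkeeping nature: correctly matching each hypothesis with the version of Lemmas~\ref{l-M-NT-2n}/\ref{l-M-PT-2n} for the appropriate problem and interval (the statement assumes the characterization for all of them, ``with the suitable subscript''), reading off the right inequality among the six in Theorem~\ref{t-order-eigen-2n}, and justifying the elementary logical step that a function which is somewhere negative and which does not change sign must be nonpositive. Notably, no oscillation-theoretic ingredient enters, which is precisely why this corollary remains valid for $n>1$ even though several assertions of Theorem~\ref{t-G-signo-cte-2n} fail there.
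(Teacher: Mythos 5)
Your proof is correct and follows exactly the route the paper intends: the paper states the corollary as a direct consequence of Theorem~\ref{t-order-eigen-2n} together with the characterizations in Lemmas~\ref{l-M-NT-2n} and~\ref{l-M-PT-2n}, and your argument (hypothesis $\Rightarrow$ positivity of the first eigenvalue, propagation through the eigenvalue ordering, and the dichotomy ruling out nonnegativity) is precisely that deduction written out in detail. No gaps; the bookkeeping of which item of Theorem~\ref{t-order-eigen-2n} serves each bullet is accurate.
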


																	\section{Comparison Principles}
																	In this section we will use the connecting expressions for Green's functions obtained in Section 3 to compare the values that several Green's functions take point by point. It must be pointed out that, since the relations between the constant sign of the Green's functions are not as strong as for the case $n=1$, the results in this section will also be weaker (in some cases) than the ones obtained in \cite{CabCidSom2016}. 
																	
																	However, some results which could not be deduced for $n=1$ hold for $n>1$. This will be the case of Item 4 in Corollary \ref{c-comp-Neu-M1-2n} or Item 1 in Theorem \ref{t-comp-DM2-2n}, which do not make sense for the case $n=1$ because, in such a case, $G_D[2\,T]$ can never be nonnegative on $J\times J$.
																	
																	First, from \eqref{e-Green-DN-P-2n}, we obtain the following result.
																	\begin{corollary}\label{c-comp-Neu-Dir-2n}
																		\begin{enumerate}
																			\item 	If $G_P[2\,T]\geq 0$ on $J\times J$, then $G_N[T](t,s)\geq |G_D[T](t,s)|$ for all $(t,s)\in I\times I.$
																			\item If $G_P[2\,T]\le 0$ on $J\times J$, then
																			$G_N[T](t,s) \le -|G_D[T](t,s)|$  for all $(t,s)\in I\times I.$
																		\end{enumerate}
																	\end{corollary}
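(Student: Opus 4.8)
The plan is to read off both assertions directly from the decomposition formula \eqref{e-Green-DN-P-2n}, namely
\[
G_P[2\,T](t,s) = \tfrac12\bigl(G_N[T](t,s)+G_D[T](t,s)\bigr), \qquad
G_P[2\,T](2\,T-t,s) = \tfrac12\bigl(G_N[T](t,s)-G_D[T](t,s)\bigr),
\]
valid for all $(t,s)\in I\times I$. Adding and subtracting these two identities gives the pointwise expressions
\[
G_N[T](t,s) = G_P[2\,T](t,s)+G_P[2\,T](2\,T-t,s), \qquad
G_D[T](t,s) = G_P[2\,T](t,s)-G_P[2\,T](2\,T-t,s),
\]
so that $G_N[T]$ is the sum and $G_D[T]$ is the difference of the two quantities $A:=G_P[2\,T](t,s)$ and $B:=G_P[2\,T](2\,T-t,s)$.

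For Assertion 1, assume $G_P[2\,T]\ge 0$ on $J\times J$. Fix $(t,s)\in I\times I$; then $A\ge 0$ and $B\ge 0$ since both $(t,s)$ and $(2\,T-t,s)$ lie in $J\times J$. Hence $G_N[T](t,s)=A+B\ge 0$ and $|G_D[T](t,s)|=|A-B|\le A+B=G_N[T](t,s)$, which is exactly the claimed inequality $G_N[T](t,s)\ge |G_D[T](t,s)|$. For Assertion 2, assume instead $G_P[2\,T]\le 0$ on $J\times J$, so $A\le 0$ and $B\le 0$. Then $G_N[T](t,s)=A+B\le 0$, and writing $G_N[T](t,s) = -(|A|+|B|)\le -\bigl||A|-|B|\bigr| = -|A-B| = -|G_D[T](t,s)|$ gives the desired $G_N[T](t,s)\le -|G_D[T](t,s)|$. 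Since $(t,s)$ was arbitrary in $I\times I$, both assertions follow.

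There is essentially no obstacle here: the whole content is the elementary fact that for real numbers $A,B$ of the same sign one has $|A-B|\le|A+B|=|A|+|B|$, applied with $A+B=G_N[T]$ and $A-B=G_D[T]$. The only point requiring a word of care is that the translated argument $(2\,T-t,s)$ stays inside the domain $J\times J = [0,2\,T]\times[0,2\,T]$ of $G_P[2\,T]$ whenever $(t,s)\in I\times I = [0,T]\times[0,T]$, which is immediate since $t\in[0,T]$ forces $2\,T-t\in[T,2\,T]\subset J$. One should of course note, as the surrounding text already presupposes, that all the operators involved ($L$ on $X_{N,T}$ and $X_{D,T}$, and $\widetilde L$ on $X_{P,2T}$) are assumed nonresonant so that the Green's functions in \eqref{e-Green-DN-P-2n} all exist.
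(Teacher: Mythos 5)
Your proof is correct and follows exactly the paper's route: the corollary is stated there as an immediate consequence of the decomposition \eqref{e-Green-DN-P-2n} (equivalently \eqref{e-Green-NP-2n} and \eqref{e-Green-DP-2n}), which is precisely the sum/difference argument you spell out. The elementary inequality for same-signed $A,B$ and the check that $(2T-t,s)\in J\times J$ are the only details, and you handle both.
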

																	
																	The difference between this case and the particular one with $n=1$ and $a_1\equiv 0$ is that when $n=1$, the constant sign of $G_P[2\,T]$ ensures not only the constant sign of $G_N[T]$ but also that of $G_D[T]$. Thus, in such case, we would substitute $|G_D[T](t,s)|$ by $-G_D[T](t,s)$ in the inequalities given in previous corollary.
																	
																	As a consequence of previous corollary, we can compare the solutions of \eqref{e-N-2n} and \eqref{e-D-2n}:
																	
																	\begin{theorem}\label{t-comp-ND-2n}
																		Let $u_N$ be the unique solution of problem \eqref{e-N-2n} for $\sigma = \sigma_1$ and $u_D$ the unique solution of problem \eqref{e-D-2n} for $\sigma = \sigma_2$. Then
																		\begin{enumerate}
																			\item If $G_P[2\,T]\geq 0$ on $J\times J$ and $|\sigma_2(t)| \le \sigma_1(t)$ a.\,e. $t \in I$, then $|u_D(t)| \le u_N(t)$ for all $t \in I$.
																			\item If $G_P[2\,T]\le 0$ on $J\times J$ and $0\le \sigma_2(t) \le \sigma_1(t)$ a.\,e. $t \in I$, then $u_N(t)\le 0$ and $u_N(t) \le u_D(t)$ for all $t \in I$.
																			\item If $G_P[2\,T]\le 0$ on $J\times J$ and $\sigma_1(t) \le \sigma_2(t) \le 0$ a.\,e. $t \in I$, then $u_N(t)\ge 0$ and $u_D(t) \le u_N(t)$ for all $t \in I$.
																		\end{enumerate}
																	\end{theorem}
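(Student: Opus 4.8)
The plan is to represent the two solutions through their Green's functions,
\[
u_N(t)=\int_0^T G_N[T](t,s)\,\sigma_1(s)\,\dif s,\qquad u_D(t)=\int_0^T G_D[T](t,s)\,\sigma_2(s)\,\dif s,
\]
and then to compare the integrands pointwise. The sign hypotheses on $G_P[2\,T]$ enter in two ways: through Corollary~\ref{c-relac-sign-Green-2n} (Assertions 1 and 2), which transfers the sign of $G_P[2\,T]$ to $G_N[T]$, and through Corollary~\ref{c-comp-Neu-Dir-2n}, which bounds $G_D[T]$ by $G_N[T]$ via $G_N[T](t,s)\ge |G_D[T](t,s)|$ when $G_P[2\,T]\ge 0$ and $G_N[T](t,s)\le -|G_D[T](t,s)|$ when $G_P[2\,T]\le 0$. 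No oscillation theory is needed; everything will reduce to monotonicity of the integral applied to these inequalities.

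For Assertion 1 I would argue as follows: if $G_P[2\,T]\ge 0$ then $G_N[T]\ge 0$ and $\sigma_1\ge |\sigma_2|\ge 0$, so by the triangle inequality inside the integral and then the pointwise bound $|G_D[T](t,s)|\le G_N[T](t,s)$ together with $|\sigma_2|\le \sigma_1$,
\[
|u_D(t)|\le \int_0^T |G_D[T](t,s)|\,|\sigma_2(s)|\,\dif s\le \int_0^T G_N[T](t,s)\,\sigma_1(s)\,\dif s=u_N(t),
\]
which is exactly Item 1.

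For Assertions 2 and 3 the hypothesis $G_P[2\,T]\le 0$ gives $G_N[T]\le 0$ by Corollary~\ref{c-relac-sign-Green-2n}, so the sign of $u_N$ is immediate from the sign of $\sigma_1$ (nonpositive if $\sigma_1\ge 0$, nonnegative if $\sigma_1\le 0$). For the comparison $u_N\le u_D$ in Item 2 I would combine, at every fixed $(t,s)$, the chain $G_D[T](t,s)\ge -|G_D[T](t,s)|\ge G_N[T](t,s)$ coming from Corollary~\ref{c-comp-Neu-Dir-2n}: multiplying by $\sigma_2(s)\ge 0$ and integrating yields $u_D(t)\ge \int_0^T G_N[T](t,s)\,\sigma_2(s)\,\dif s$, and since $G_N[T]\le 0$ and $\sigma_2\le\sigma_1$ one has $G_N[T](t,s)\,\sigma_2(s)\ge G_N[T](t,s)\,\sigma_1(s)$ pointwise, so this last integral is $\ge u_N(t)$. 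Item 3 will be the mirror image: using the same pointwise inequality $G_D[T](t,s)\ge G_N[T](t,s)$ but now multiplying by $\sigma_2(s)\le 0$ reverses it, and combining with $G_N[T]\le 0$ and $\sigma_1\le\sigma_2$ one reaches $u_D(t)\le \int_0^T G_N[T](t,s)\,\sigma_2(s)\,\dif s\le u_N(t)$.

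The main thing to be careful about is the bookkeeping of signs. Because $G_D[T]$ need not be of constant sign for $n\ge 2$ (in contrast to the case $n=1$), one cannot split the integral according to the sign of the kernel; instead the argument must systematically use the two-sided inequality $-|G_D[T]|\le G_D[T]\le |G_D[T]|$ together with whichever of the Corollary~\ref{c-comp-Neu-Dir-2n} bounds is available, and keep track of the fact that multiplying an inequality by one of the (possibly nonpositive) factors $G_N[T]$, $\sigma_1$ or $\sigma_2$ may reverse its direction. Once all these directions are pinned down, each assertion follows by a single application of the monotonicity of the integral.
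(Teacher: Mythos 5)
Your proposal is correct and follows essentially the same route as the paper: represent $u_N$ and $u_D$ through their Green's functions, invoke Corollary~\ref{c-comp-Neu-Dir-2n} (together with the sign of $G_N[T]$) and conclude by monotonicity of the integral, with careful sign bookkeeping. The only cosmetic difference is that in Item~2 you pass through the intermediate quantity $\int_0^T G_N[T](t,s)\,\sigma_2(s)\,\dif s$ rather than the paper's $\int_0^T -|G_D[T](t,s)|\,\sigma_i(s)\,\dif s$, which is an equally valid chain of inequalities.
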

																	
																	\begin{proof}
																		\begin{enumerate}
																			\item Since $G_P[2\,T]\ge 0$ on $J\times J$ then, from Corollary \ref{c-comp-Neu-Dir-2n}, it holds that 
																			\begin{equation*}\begin{split}
																					|u_D(t)|&=\left|\int_{0}^{T} G_D[T](t,s)\,\sigma_2(s)\,\dif s\right| \le \int_{0}^{T} |G_D[T](t,s)|\,|\sigma_2(s)|\,\dif s \\ & \le \int_{0}^{T} G_N[T](t,s) \,\sigma_1(s)\, \dif s= u_N(t).
																				\end{split}\end{equation*}
																				\item Since $G_P[2\,T]\le 0$ on $J\times J$ then, from Corollary \ref{c-comp-Neu-Dir-2n}, since $\sigma_1(s)\ge 0$ a.\,e. $s\in I$, 
																				\[G_N[T](t,s)\,\sigma_1(s)\le -|G_D[T](t,s)|\,\sigma_1(s), \quad \forall\,(t,s)\in I\times I. \]
Moreover, from $\sigma_2(s) \le \sigma_1(s)$ and $\sigma_2(s)\ge 0$ a.\,e. $s\in I$, we deduce that
\[-|G_D[T](t,s)|\,\sigma_1(s)\le -|G_D[T](t,s)|\,\sigma_2(s)\le G_D[T](t,s)\,\sigma_2(s), \quad \forall\,(t,s)\in I\times I. \]
Therefore, for all $t\in I$, we have
\begin{equation*}\begin{split}
u_N(t)&=\int_{0}^{T} G_N[T](t,s)\,\sigma_1(s)\,\dif s \le \int_{0}^{T} -|G_D[T](t,s)|\,\sigma_1(s)\,\dif s \\ & \le \int_{0}^{T} -|G_D[T](t,s)| \,\sigma_2(s)\, \dif s \le \int_{0}^{T} G_D[T](t,s) \,\sigma_2(s)\, \dif s= u_D(t).
\end{split}\end{equation*}
Finally, the fact that $u_N\le 0$ on $I$ is deduced from $G_N[T]\le 0$ and $\sigma_1\ge 0$.
																					
																					\item Since $G_P[2\,T]\le 0$ on $J\times J$ then, from Corollary \ref{c-comp-Neu-Dir-2n}, it can be deduced that 
																					\[G_N[T](t,s)\le G_D[T](t,s) \ \text{ and } \ G_N[T](t,s)\le 0, \quad \forall\,(t,s)\in I\times I \]
and so, since $\sigma_2(s)\le 0$ and $\sigma_1(s) \le \sigma_2(s)$ a.\,e. $s\in I$, 
\[G_D[T](t,s)\,\sigma_2(s)\le G_N[T](t,s)\,\sigma_2(s) \le G_N[T](t,s)\,\sigma_1(s), \quad \forall\,(t,s)\in I\times I \]
Therefore,
\begin{equation*}\begin{split}
u_D(t)&=\int_{0}^{T} G_D[T](t,s)\,\sigma_2(s)\,\dif s  \le \int_{0}^{T} G_N[T](t,s) \,\sigma_1(s)\, \dif s= u_N(t).
\end{split}\end{equation*}
Finally, the fact that $u_N\ge 0$ on $I$ is deduced from $G_N[T]\le 0$ and $\sigma_1\le 0$.
\end{enumerate}
\end{proof}

																				So, the main difference with respect to the case $n=1$ is that when $n=1$ we are able to ensure the constant sign of function $u_D$, which does not happen in this case.
																				
																				Analogously, from \eqref{e-Green-NM1-N-2n} and \eqref{e-Green-DM2-D-2n}, the constant sign of $G_N[2\,T]$ and $G_D[2\,T]$ lets us deduce some point-by-point relation between various Green's functions. 
																				\begin{corollary}\label{c-comp-Neu-M1-2n}
																					\begin{enumerate}
																						\item If $G_N[2\,T]\geq 0$ on $J\times J$, then
																						$G_N[T](t,s)\geq |G_{M_1}[T](t,s)|$ for all $(t,s)\!\in I\times I.$
																						\item If $G_N[2\,T]\le 0$ on $J\times J$, then
																						$G_N[T](t,s) < -|G_{M_1}[T](t,s)|$ for all $(t,s)\in I\times I.$
																						\item If $G_D[2\,T]\leq 0$ on $J\times J$, then
																						$G_{M_2}[T](t,s) < -|G_{D}[T](t,s)|$ for all $(t,s)\in I\times I.$
																						\item If $G_D[2\,T]\geq 0$ on $J\times J$, then
																						$G_{M_2}[T](t,s) \geq |G_{D}[T](t,s)|$ for all $(t,s)\in I\times I.$
																					\end{enumerate}
																				\end{corollary}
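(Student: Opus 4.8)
The plan is to read the four inequalities directly off the identities \eqref{e-Green-NM1-N-2n} and \eqref{e-Green-DM2-D-2n} (equivalently, off \eqref{e-Green-NN-2n}, \eqref{e-Green-M1N-2n}, \eqref{e-Green-DD-2n} and \eqref{e-Green-M2D-2n}), using nothing beyond the elementary fact that a real number $x$ satisfies $x\ge |a|$ exactly when $x\ge a$ and $x\ge -a$, and $x\le -|a|$ exactly when $x\le a$ and $x\le -a$. The key observation is that \eqref{e-Green-NM1-N-2n} evaluates $G_N[2\,T]$ at the two points $(t,s)$ and $(2\,T-t,s)$, and for $(t,s)\in I\times I$ both of these lie in $J\times J$ (since $2\,T-t\in[T,2\,T]$); hence any sign condition imposed on $G_N[2\,T]$ over the whole square $J\times J$ produces two simultaneous inequalities linking $G_N[T](t,s)$ and $G_{M_1}[T](t,s)$.

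Concretely, for Item~1 I would argue as follows. If $G_N[2\,T]\ge 0$ on $J\times J$, then \eqref{e-Green-NM1-N-2n} gives $G_N[T](t,s)+G_{M_1}[T](t,s)=2\,G_N[2\,T](t,s)\ge 0$ and $G_N[T](t,s)-G_{M_1}[T](t,s)=2\,G_N[2\,T](2\,T-t,s)\ge 0$ for every $(t,s)\in I\times I$, so by the elementary remark $G_N[T](t,s)\ge |G_{M_1}[T](t,s)|$. Item~2 is the mirror image: from $G_N[2\,T]\le 0$ on $J\times J$ the same two identities give $G_N[T](t,s)\le G_{M_1}[T](t,s)$ and $G_N[T](t,s)\le -G_{M_1}[T](t,s)$, hence $G_N[T](t,s)\le -|G_{M_1}[T](t,s)|$. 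Items~3 and~4 are handled identically, replacing \eqref{e-Green-NM1-N-2n} by \eqref{e-Green-DM2-D-2n} and the pair $(G_N[T],G_{M_1}[T])$ by $(G_{M_2}[T],G_D[T])$.

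I expect the only delicate point to be the strict inequalities asserted in Items~2 and~3. The computation above actually yields the sharper identity $G_N[T](t,s)+|G_{M_1}[T](t,s)|=2\max\{G_N[2\,T](t,s),\,G_N[2\,T](2\,T-t,s)\}$, so the strict inequality $G_N[T](t,s)<-|G_{M_1}[T](t,s)|$ holds precisely at those $(t,s)$ for which $G_N[2\,T]$ is strictly negative at both symmetric points. Thus, to obtain the strict versions one must upgrade the hypothesis ``$\le 0$ on $J\times J$'' to strict negativity at the relevant points; this is exactly what is available in every setting where the paper verifies a constant-sign hypothesis, since conditions $(N_g)$/$(P_g)$, the Karlin-type criteria recalled after Theorem~\ref{t-order-eigen-2n}, and Lemmas~\ref{l-M-NT-2n}--\ref{l-M-PT-2n} all force the Green's function to be strictly sign-definite on the interior. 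Note, however, that for $G_D[2\,T]$ strict negativity necessarily fails along $t=0$ and $t=2\,T$, where the Dirichlet-type Green's functions vanish identically, so in Item~3 the strict inequality should be read on the appropriate (half-)open square, in the spirit of the statement of Theorem~\ref{t-G-signo-cte-2n}, rather than literally on all of $I\times I$; keeping this boundary bookkeeping straight is the main thing to be careful about, everything else being a one-line triangle-inequality argument.
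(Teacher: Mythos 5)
Your argument is exactly the paper's: the corollary is stated as an immediate consequence of the decompositions \eqref{e-Green-NM1-N-2n} and \eqref{e-Green-DM2-D-2n} (no further proof is given there), and your two-inequality/absolute-value reading of those identities is the intended one, so for the non-strict inequalities your proposal is correct and complete. Your caveat about Items 2 and 3 is also well taken and is a point against the statement as printed rather than against your proof: from a hypothesis that is merely ``$\le 0$ on $J\times J$'' only ``$\le$'' can follow, and in Item 3 the strict inequality literally fails at $t=0$, where $G_D[2\,T](0,s)=G_D[2\,T](2\,T,s)=0$ forces $G_{M_2}[T](0,s)=G_D[T](0,s)=0$; the inequalities should be read as non-strict (as in Corollary~\ref{c-comp-Neu-Dir-2n}) unless strict sign conditions on $G_N[2\,T]$, resp.\ $G_D[2\,T]$ at the two symmetric points are assumed, exactly as you indicate.
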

																				
																				As a consequence of the previous corollary, we deduce the following comparison principles among the solutions of the corresponding problems. The arguments are similar to the ones used in the proof of Theorem~\ref{t-comp-ND-2n}.
																				\begin{theorem}\label{t-comp-NM1-2n}
																					Let $u_N$ be the unique solution of problem \eqref{e-N-2n} for $\sigma = \sigma_1$ and $u_{M_1}$ the unique solution of problem \eqref{e-M1-2n} for $\sigma = \sigma_2$. Then
																					\begin{enumerate}
																						\item If $G_N[2\,T]\geq 0$ on $J\times J$ and $|\sigma_2(t)| \le \sigma_1(t)$ a.\,e. $t \in I$, then $|u_{M_1}(t)| \le u_N(t)$ for all $t \in I$.
																						\item If $G_N[2\,T]\le 0$ on $J\times J$ and $0\le \sigma_2(t) \le \sigma_1(t)$ a.\,e. $t \in I$, then $u_N(t)\le 0$ and $u_N(t) \le u_{M_1}(t)$ for all $t \in I$.
																						\item If $G_N[2\,T]\le 0$ on $J\times J$ and $\sigma_1(t) \le \sigma_2(t) \le 0$ a.\,e. $t \in I$, then $u_N(t)\ge 0$ and $u_{M_1}(t) \le u_N(t)$ for all $t \in I$.
																					\end{enumerate}
																				\end{theorem}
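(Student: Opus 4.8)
The plan is to follow the proof of Theorem~\ref{t-comp-ND-2n} line by line, replacing Corollary~\ref{c-comp-Neu-Dir-2n} by Corollary~\ref{c-comp-Neu-M1-2n} and the pair $(G_D[T],u_D)$ by $(G_{M_1}[T],u_{M_1})$. Since $L$ is nonresonant in $X_{N,T}$ and in $X_{M_1,T}$, one starts from the integral representations
\[u_N(t)=\int_0^T G_N[T](t,s)\,\sigma_1(s)\,\dif s,\qquad u_{M_1}(t)=\int_0^T G_{M_1}[T](t,s)\,\sigma_2(s)\,\dif s,\qquad t\in I,\]
and then reads off the needed pointwise inequalities between the two kernels directly from Corollary~\ref{c-comp-Neu-M1-2n}.

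For Assertion~1, I would use item~1 of Corollary~\ref{c-comp-Neu-M1-2n}, $|G_{M_1}[T](t,s)|\le G_N[T](t,s)$, together with $|\sigma_2|\le\sigma_1$, to bound $|u_{M_1}(t)|\le\int_0^T|G_{M_1}[T](t,s)|\,|\sigma_2(s)|\,\dif s\le\int_0^T G_N[T](t,s)\,\sigma_1(s)\,\dif s=u_N(t)$. For Assertion~2, I would invoke item~2 of Corollary~\ref{c-comp-Neu-M1-2n}, $G_N[T](t,s)<-|G_{M_1}[T](t,s)|\le 0$; multiplying by $\sigma_1(s)\ge 0$, then using $0\le\sigma_2\le\sigma_1$ and the elementary bound $-|G_{M_1}[T]|\le G_{M_1}[T]$, I obtain the pointwise chain
\[G_N[T](t,s)\,\sigma_1(s)\le -|G_{M_1}[T](t,s)|\,\sigma_1(s)\le -|G_{M_1}[T](t,s)|\,\sigma_2(s)\le G_{M_1}[T](t,s)\,\sigma_2(s),\]
which after integration in $s$ gives $u_N\le u_{M_1}$ on $I$; the inequality $u_N\le 0$ then follows at once from $G_N[T]<0$ and $\sigma_1\ge 0$. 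For Assertion~3, from the same item~2 one extracts $G_N[T](t,s)\le G_{M_1}[T](t,s)$ and $G_N[T](t,s)\le 0$; since $\sigma_2\le 0$ and $\sigma_1\le\sigma_2$ this yields $G_{M_1}[T](t,s)\,\sigma_2(s)\le G_N[T](t,s)\,\sigma_2(s)\le G_N[T](t,s)\,\sigma_1(s)$, whence $u_{M_1}\le u_N$ on $I$ after integrating, and $u_N\ge 0$ is immediate from $G_N[T]\le 0$ and $\sigma_1\le 0$.

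I do not expect a genuine obstacle here: all the analytic content is already encapsulated in Corollary~\ref{c-comp-Neu-M1-2n}, which plays exactly the role that Corollary~\ref{c-comp-Neu-Dir-2n} plays in Theorem~\ref{t-comp-ND-2n}. The only point that needs a little care is that item~2 of Corollary~\ref{c-comp-Neu-M1-2n} is stated as the strict inequality $G_N[T]<-|G_{M_1}[T]|$, so before running the sign manipulations in Assertions~2 and~3 one must still deduce from it the two weak facts $G_N[T]\le G_{M_1}[T]$ and $G_N[T]\le 0$; both are immediate consequences of $-|G_{M_1}[T]|\le G_{M_1}[T]$.
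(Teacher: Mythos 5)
Your proposal is correct and is exactly the paper's intended argument: the paper itself proves this theorem by remarking that the arguments of Theorem~\ref{t-comp-ND-2n} carry over verbatim with Corollary~\ref{c-comp-Neu-M1-2n} in place of Corollary~\ref{c-comp-Neu-Dir-2n}, which is precisely what you do, including the correct handling of the sign manipulations in Assertions 2 and 3. Your remark that the strict inequality in item 2 of Corollary~\ref{c-comp-Neu-M1-2n} only needs to be used in its weak form is a fair observation and does not affect the argument.
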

																
											\begin{theorem}\label{t-comp-DM2-2n}
										Let $u_{M_2}$ be the unique solution of problem \eqref{e-M2-2n} for $\sigma = \sigma_1$ and $u_{D}$ the unique solution of problem \eqref{e-D-2n} for $\sigma = \sigma_2$. Then, it holds that:
						\begin{enumerate}
									\item If $G_D[2\,T]\geq 0$ on $J\times J$ and $|\sigma_2(t)| \le \sigma_1(t)$ a.\,e. $t \in I$, then $|u_D(t)| \le u_{M_2}(t)$ for all $t \in I$.
					\item If $G_D[2\,T]\le 0$ on $J\times J$ and $0\le \sigma_2(t) \le \sigma_1(t)$ a.\,e. $t \in I$, then $u_{M_2}(t)\le 0$ and $u_{M_2}(t) \le u_D(t)$ for all $t \in I$.
				\item If $G_D[2\,T]\le 0$ on $J\times J$ and $\sigma_1(t) \le \sigma_2(t) \le 0$ a.\,e. $t \in I$, then $u_{M_2}(t)\ge 0$ and $u_D(t) \le u_{M_2}(t)$ for all $t \in I$.
												\end{enumerate}
								\end{theorem}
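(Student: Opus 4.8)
The plan is to follow verbatim the strategy used for Theorem~\ref{t-comp-ND-2n}, merely replacing the pair $(G_N[T],G_D[T])$ by $(G_{M_2}[T],G_D[T])$ and invoking Items~3 and~4 of Corollary~\ref{c-comp-Neu-M1-2n} in place of Corollary~\ref{c-comp-Neu-Dir-2n}. Since the statement presupposes that problems \eqref{e-M2-2n} and \eqref{e-D-2n} are nonresonant, the two solutions admit the integral representations
\[u_{M_2}(t)=\int_{0}^{T}G_{M_2}[T](t,s)\,\sigma_1(s)\,\dif s,\qquad u_{D}(t)=\int_{0}^{T}G_{D}[T](t,s)\,\sigma_2(s)\,\dif s,\qquad t\in I,\]
and the whole argument amounts to propagating pointwise inequalities between the kernels to the corresponding integrals.

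For Item~1, assuming $G_D[2\,T]\ge 0$ on $J\times J$, Item~4 of Corollary~\ref{c-comp-Neu-M1-2n} gives $G_{M_2}[T](t,s)\ge|G_D[T](t,s)|\ge 0$ on $I\times I$; combining this with $|\sigma_2|\le\sigma_1$ a.\,e., one estimates
\[|u_D(t)|\le\int_0^T|G_D[T](t,s)|\,|\sigma_2(s)|\,\dif s\le\int_0^T G_{M_2}[T](t,s)\,\sigma_1(s)\,\dif s=u_{M_2}(t),\]
which is the desired bound.

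For Items~2 and~3 one assumes $G_D[2\,T]\le 0$ on $J\times J$, so that Item~3 of Corollary~\ref{c-comp-Neu-M1-2n} yields $G_{M_2}[T](t,s)<-|G_D[T](t,s)|\le 0$ on $I\times I$, and in particular $G_{M_2}[T](t,s)\le G_D[T](t,s)$. In Item~2, since $0\le\sigma_2\le\sigma_1$ a.\,e., the chain
\[G_{M_2}[T](t,s)\,\sigma_1(s)\le -|G_D[T](t,s)|\,\sigma_1(s)\le -|G_D[T](t,s)|\,\sigma_2(s)\le G_D[T](t,s)\,\sigma_2(s)\]
integrates to $u_{M_2}(t)\le u_D(t)$ on $I$, while $u_{M_2}\le 0$ follows from $G_{M_2}[T]\le 0$ and $\sigma_1\ge 0$. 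In Item~3, since $\sigma_1\le\sigma_2\le 0$ a.\,e. and $G_{M_2}[T]\le 0$, multiplying $\sigma_1\le\sigma_2$ by $G_{M_2}[T]$ and $G_{M_2}[T]\le G_D[T]$ by $\sigma_2$ gives
\[G_D[T](t,s)\,\sigma_2(s)\le G_{M_2}[T](t,s)\,\sigma_2(s)\le G_{M_2}[T](t,s)\,\sigma_1(s),\]
hence $u_D(t)\le u_{M_2}(t)$ on $I$, and $u_{M_2}\ge 0$ follows from $G_{M_2}[T]\le 0$ and $\sigma_1\le 0$.

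There is no genuine conceptual obstacle: Corollary~\ref{c-comp-Neu-M1-2n} already packages all the information needed on the relative sizes of $G_{M_2}[T]$ and $G_D[T]$, so the only point requiring care is tracking the direction of the inequalities when multiplying by the sign-definite factors $\sigma_i$ and $G_{M_2}[T]$. As in Theorem~\ref{t-comp-ND-2n}, the difference with the case $n=1$ is that $G_D[T]$ need not have constant sign, which is exactly why the estimates involve $|G_D[T]|$ rather than $-G_D[T]$; this is the phenomenon already noted after Corollary~\ref{c-comp-Neu-Dir-2n}.
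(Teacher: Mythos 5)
Your proposal is correct and follows exactly the route the paper intends: the paper omits the proof, stating only that the arguments mirror those of Theorem~\ref{t-comp-ND-2n} via Items~3 and~4 of Corollary~\ref{c-comp-Neu-M1-2n}, which is precisely what you carried out, with the inequality chains handled correctly.
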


\end{document}